\newcolumntype{M}[1]{>{\centering\arraybackslash}m{#1}} 
\DeclareFontFamily{OMS}{rsfs}{\skewchar\font'60}
\DeclareFontShape{OMS}{rsfs}{m}{n}{<-5>rsfs5 <5-7>rsfs7 <7->rsfs10 }{}
\DeclareSymbolFont{rsfs}{OMS}{rsfs}{m}{n}
\DeclareSymbolFontAlphabet{\scr}{rsfs}
\DeclareSymbolFontAlphabet{\scr}{rsfs}
\newcommand\cO{{\mathcal O}}
\newcommand\bbC{{\mathbb C}}
\newcommand\bbN{{\mathbb N}}
\newcommand\bbP{{\mathbb P}}
\newcommand\bbQ{{\mathbb Q}}
\newcommand\bbS{{\mathbb S}}
\newcommand\bbZ{{\mathbb Z}}
\newcommand\sE{{\mathscr E}}
\newcommand\sF{{\mathscr F}}
\newcommand\sG{{\mathscr G}}
\newcommand\sL{{\mathscr L}}
\newcommand\sM{{\mathscr M}}
\newcommand\sN{{\mathscr N}}
\newcommand\sO{{\mathscr O}}
\newcommand\sQ{{\mathscr Q}}
\newcommand{\codim}{{\rm codim}}
\newcommand{\Gr}{\rm{Gr}}
\DeclareMathOperator*{\pic}{Pic}
\DeclareMathOperator*{\red}{red}
\DeclareMathOperator*{\Exc}{Exc}
\newcommand{\NE}[1]{\ensuremath{\overline{\mbox{NE}}(#1)}}
\newcommand{\defeq}{{\vcentcolon=}}
\newtheorem{lemma1}{}[section]
\newenvironment{lemma}{\begin{lemma1}{\bf Lemma.}}{\end{lemma1}}
\newenvironment{example}{\begin{lemma1}{\bf Example.}\rm}{\end{lemma1}}
\newenvironment{thm}{\begin{lemma1}{\bf Theorem.}}{\end{lemma1}}
\newenvironment{prop}{\begin{lemma1}{\bf Proposition.}}{\end{lemma1}}
\newenvironment{remark}{\begin{lemma1}{\bf Remark.}\rm}{\end{lemma1}}
\newenvironment{defn-prop}{\begin{lemma1}{\bf Definition-Proposition.}}{\end{lemma1}}
\newenvironment{thm A}{{\bf Theorem A.}}{}
\newenvironment{thm B}{{\bf Theorem B.}}{}
\newenvironment{thm C}{{\bf Theorem C.}}{}
\newenvironment{thm D}{{\bf Theorem D.}}{}
\newenvironment{remark*}{{\bf Remark.}}{}
\newenvironment{example*}{{\bf Example.}}{}
\newenvironment{assumption*}{{\bf Assumption.}}{}
\newenvironment{conclusion*}{{\bf Conclusion.}}{}
\theoremstyle{definition}
\setlist[itemize]{leftmargin=*}
\setlist[enumerate]{leftmargin=*}
\numberwithin{equation}{section} 
\title{Fano manifolds containing a negative divisor isomorphic to a rational homogeneous space of Picard number one} 
\date{\today}
\subjclass[2010]{14J45, 14M17, 14E30}
\keywords{Fano manifolds, rational homogeneous spaces, extremal contraction, Mori theory}
\author{Jie Liu}
\address{Jie Liu, Institute of Mathematics, Academy of Mathematics and Systems Science, Chinese Academy of Sciences, Beijing, 100190, China}
\email{jliu@amss.ac.cn}
\begin{document}

\begin{abstract}
	Let $X$ be an $n$-dimensional complex Fano manifolds $(n\geq 3)$. Assume that $X$ contains a divisor $A$, which is isomorphic to a rational homogeneous space with Picard number one, such that the conormal bundle $\sN^*_{A/X}$ is ample over $A$. Building on the works of Tsukioka, Watanabe and Casagrande-Druel, we give a complete classification of such pairs $(X,A)$.
\end{abstract}

\maketitle

\tableofcontents

\vspace{-0.2cm}

\section{Introduction}

Throughout this paper, we work over the complex numbers. As an application of Mori theory, Mori and Mukai succeeded in classifying Fano threefolds by viewing extremal rays in details \cite{MoriMukai1981}. In general it is very difficult to describe the extremal rays and contractions of higher dimensional Fano manifolds. However, if there exists a "special" divisor on $X$, then it is possible to get various information about $X$. For example, Watanabe classified Fano manifolds $X$ containing an ample divisor isomorphic to a homogeneous space in \cite{Watanabe2008}.

\begin{thm}\cite[Theorem 1]{Watanabe2008}
	\label{Watanabe-Homogeneous}
	Let $X$ be a projective manifold of dimension $n\geq 3$ containing an ample divisor $A$ isomorphic to a rational homogeneous space. If $\rho(A)=1$, then the pair $(X,\sO_X(A))$ is isomorphic to one of the following:
	\begin{enumerate}
		\item $(\bbP^n,\sO_{\bbP^n}(1))$;
		
		\item $(\bbP^n,\sO_{\bbP^n}(2))$ and $n\geq 4$;
		
		\item $(Q^{n},\sO_{Q^n}(1))$ and $n\geq 4$;
		
		\item $(\Gr(2,2n),\sO(1))$ and $n\geq 2$, where $\Gr(2,2n)$ is the Grassmannian of $2$-dimensional subspaces in an $2n$-dimensional vector space and $\sO(1)$ is the ample generator of the Picard group of $\Gr(2,2n)$;
		
		\item $(E_6/P_1,\sO(1))$, where $E_6/P_1$ is the $27$-dimensional rational homogeneous space of type $E_6$ and $\sO(1)$ is the ample generator of the Picard group of $E_6/P_1$.
	\end{enumerate}
\end{thm}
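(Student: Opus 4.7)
My plan is to combine the Lefschetz hyperplane theorem with adjunction to pin down the Fano index of $X$, and then to classify the pairs by splitting according to the size of that index: the Kobayashi--Ochiai theorem handles the high-index regime, and the classification of Fano manifolds of small coindex handles the remainder.

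Since $A$ is an ample divisor with $\dim A=n-1\geq 2$, the Lefschetz hyperplane theorem gives an injection $\pic(X)\hookrightarrow \pic(A)=\bbZ$; combined with $\rho(A)=1$ and the fact that $[A]$ is a nonzero class, this forces $\pic(X)=\bbZ\cdot H_X$, and when $n\geq 4$ the map is an isomorphism so that $H_X|_A=H$ is the ample generator of $\pic(A)$. The case $n=3$ I would treat separately: the only rational homogeneous surface with $\rho=1$ is $\bbP^2$, and a direct adjunction computation then forces $(X,A)=(\bbP^3,\sO(1))$. Assume from now on that $n\geq 4$. Writing $A\sim dH_X$ with $d\geq 1$ and $-K_A=i_A H$, where $i_A$ is the index of the homogeneous space $A$, adjunction gives
\[
-K_X\sim (i_A+d)H_X,
\]
so $X$ is a Fano manifold of Picard number one with index $i_X=i_A+d\geq i_A+1$; this single numerical identity drives the rest of the argument.

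The proof then splits on whether $i_X\geq n$ or $i_X<n$. In the first regime the Kobayashi--Ochiai theorem pins $X$ down as $\bbP^n$ (when $i_X=n+1$) or $Q^n$ (when $i_X=n$); reading off $i_A=i_X-d$ and matching against the list of rational homogeneous spaces with $\rho=1$ and dimension $n-1$ recovers cases~(1)--(3), with the only surviving values being $d\in\{1,2\}$ in $\bbP^n$ and $d=1$ in $Q^n$, since no smooth hypersurface of degree $\geq 3$ in projective space is a rational homogeneous variety of Picard number one. In the second regime $i_A\leq n-2$, so the coindex of $A$ is at least $2$; one invokes the classification of Fano $n$-folds with Picard number one and small coindex to restrict $X$, and then matches the resulting shortlist against the finite list of rational homogeneous spaces with $\rho=1$ of dimension $n-1$ to arrive at cases~(4)--(5).

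The main obstacle is this low-index analysis. Kobayashi--Ochiai no longer suffices, so one has to certify both that each candidate $X$ really is a smooth Fano manifold of the prescribed index and Picard number, and that the homogeneous $A$ really embeds as an ample divisor in it. In practice this reduces to a careful study of the hyperplane sections of $\Gr(2,2n)$ under its Plücker embedding, whose smooth members are the symplectic Grassmannians and are known to be rational homogeneous with $\rho=1$, and of $E_6/P_1$ under its minuscule embedding; a finite check over the remaining $G/P$'s with $\rho=1$, ruling each out via the numerical constraint $i_X=i_A+d$ together with $\dim X=\dim A+1$, then completes the classification.
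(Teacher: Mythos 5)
This statement is quoted from \cite{Watanabe2008} and the paper gives no proof of it, so there is nothing internal to compare against; I will assess your proposal on its own terms. Your reduction is fine as far as it goes: Lefschetz gives $\pic(X)\simeq\bbZ$ (an isomorphism onto $\pic(A)$ once $n\geq 4$), adjunction gives $i_X=i_A+d$ with $A\sim dH_X$, and Kobayashi--Ochiai in the regime $i_X\geq n$ correctly yields cases (1)--(3) after checking that smooth members of $|\sO_{\bbP^n}(d)|$ with $d\geq 3$ and of $|\sO_{Q^n}(d)|$ with $d\geq 2$ are not homogeneous of Picard number one (you state the first check but omit the second).

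The genuine gap is in the low-index regime, which is where all the content of the theorem lives. You propose to handle $i_X<n$ by ``the classification of Fano manifolds of small coindex,'' but the coindex of $X$ is not small in the cases you need to produce: for $X=\Gr(2,2n)$ the coindex is $2n-3$, which is unbounded (already $5$ for $\Gr(2,8)$), and for $X=E_6/P_1$ it is $5$. Only the coindex $\leq 2$ (Fujita) and coindex $3$ (Mukai) classifications exist, so your strategy can neither recover cases (4) and (5) beyond $\Gr(2,6)$ nor rule out other candidates $X$ of large coindex containing, say, $\bbS_n$ or $E_7/P_7$ as an ample divisor; the numerical constraints $i_X=i_A+d$ and $\dim X=\dim A+1$ alone do not identify $X$. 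The missing idea is the one this paper itself uses repeatedly in analogous situations (Theorem \ref{inextendability} and Proposition \ref{Quadric-Extension}): Zak's inextendability criterion. One computes $h^1(A,T_A(-d))$ by Bott's theorem for each homogeneous $A$ of Picard number one; when this vanishes and $A$ is not a quadric, $A\subset\bbP(H^0(A,dH))$ admits no smooth projective extension other than a cone, which excludes any smooth $X$ with $\rho(X)=1$. The finitely many nonvanishing cases with $d=1$ are exactly the quadrics, the symplectic Grassmannians ${\rm Gr}_{\omega}(2,2n)$ (hyperplane sections of $\Gr(2,2n)$), and $F_4/P_4$ (the hyperplane section of $E_6/P_1$), and one then has to identify the unique extension in each case. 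Without this (or an equivalent extendability tool), the argument does not close.
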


On the other hand, Bonavero, Campana and Wi\'sniewski gave the classification of $n$-dimensional complex Fano manifolds $X$ $(n\geq 3)$ containing a divisor $A$ isomorphic to $\bbP^{n-1}$ with normal bundle $\sN_{A/X}\simeq \sO_{\bbP^{n-1}}(-1)$ in \cite[Theorem 1]{BonaveroCampanaWisniewski2002}. Some years latter, in \cite{Tsukioka2006}, Tsukioka generalized this result to the case where $\sN_{A/X}$ is isomorphic to $\sO_{\bbP^{n-1}}(-d)$ for some integer $d>0$. In particular, Tsukioka proved in \cite[Proposition 5]{Tsukioka2006} that if an $n$-dimensional Fano manifold $X$ $(n\geq 3)$ contains a prime divisor $A$ with $\rho(A)=1$, then $\rho(X)\leq 3$. In \cite[Lemma 3.1 and Theorem 3.8]{CasagrandeDruel2015}, Casagrande and Druel described in details the extremal contractions of such a pair $(X,A)$ and gave a general classification of such pairs in the extremal case $\rho(X)=3$. 

The main result of this note is to generalize the results of \cite{Tsukioka2006} and \cite{BonaveroCampanaWisniewski2002} to the case where $A$ is isomorphic to a rational homogeneous space with Picard number one.

\begin{thm}\label{Classification-Negative-Divisors}
	Let $X$ be a Fano manifold of dimension $n\geq 3$ containing a divisor $A$ isomorphic to a rational homogeneous space with Picard number one. Denote by $\sO_A(1)$ the ample generator of $\pic(A)$ and by $r$ the index of $A$. Assume that $\sN_{A/X}$ is isomorphic to $\sO_A(-d)$ for some integer $d>0$. Then $0<d<r$ and we are in one of the following cases.
	\begin{enumerate}
		\item  $\rho(X)=2$ and the pair $(X,A)$ is isomorphic to one of the following:
		\begin{enumerate}
			\item $X$ is isomorphic to $\bbP(\sO_A\oplus \sO_A(-d))$ and $A$ is a section with normal bundle $\sN_{A/X}\simeq \sO_A(-d)$;
			
			\item $X$ is obtained by blowing up one of the pairs $(X',A')$ listed in Theorem \ref{Watanabe-Homogeneous} along a smooth center $C\in \vert \sO_{A'}(d+s)\vert$, where $\sO_{A'}(1)$ is the ample generator of $\pic(A')$, $\sN_{A'/X'}\simeq \sO_{A'}(s)$ and $A$ is the strict transform of $A'$.
			
			{\color{red}\item $X$ is a smooth element in $|\sO_{\bbP^{n-1}}(\sE)(2)\otimes \pi^*\sO_{\bbP^{n-1}}(2)|$ and $A$ is isomorphic to a quadric hypersurface such that $X\cap F=A$, where $\sE$ is the vector bundle $\sO_{\bbP^{n-1}}(d)\oplus \sO_{\bbP^{n-1}}\oplus \sO_{\bbP^{n-1}}(-1)$, the map $\pi:\bbP(E)\rightarrow \bbP^{n-1}$ is the natural projection and the variety $F\subset \bbP(E)$ is the subbundle corresponding to the quotient $\sE\rightarrow \sO_{\bbP^{n-1}}\oplus \sO_{\bbP^{n-1}}(-1)$.}
		\end{enumerate}
	
	\item $\rho(X)=3$ and $X$ is obtained by blowing up a Fano manifold $Y$ along a smooth center $C\in \vert \sO_{A_Y}(d+s)\vert$ such that $-d<s<r$, where $Y$ is isomorphic to $\bbP(\sO_A\oplus\sO_A(s))$, $A_Y$ is a section with normal bundle $\sN_{A_Y/Y}\simeq \sO_{A}(s)$, $\sO_{A_Y}(1)$ is the ample generator of $\pic(A_Y)$ and $A$ is the strict transform of $A_Y$.
	\end{enumerate}	
\end{thm}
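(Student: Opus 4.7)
The plan is to bound $d$ via adjunction, use Tsukioka's estimate $\rho(X)\le 3$ to restrict the number of extremal rays of $\NE{X}$, and then to identify each of the remaining contractions via regularity results for elementary Mori contractions on smooth Fano manifolds, together with Theorem \ref{Watanabe-Homogeneous}.

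The inequality $0 < d < r$ is immediate from adjunction: $K_A = (K_X+A)|_A$ together with $A|_A = \sN_{A/X} = \sO_A(-d)$ and $K_A = \sO_A(-r)$ yields $K_X|_A = \sO_A(d-r)$, and ampleness of $-K_X|_A$ forces $d < r$. Next, since $\rho(A) = 1$, any two curves inside $A$ are numerically proportional in $X$; for a minimal rational curve $\ell \subset A$ one computes $A \cdot \ell = -d < 0$ and $-K_X \cdot \ell = r - d > 0$, so $[\ell]$ generates a $K_X$-negative extremal ray $R_A$ of $\NE{X}$. Its contraction $\varphi_A \colon X \to Y$ is birational (a fiber-type contraction would sweep $X$ by $R_A$-curves lying entirely in $A$, forcing $A = X$) and crushes $A$ to a single point. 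By \cite{Tsukioka2006}, we also have $\rho(X) \le 3$.

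Suppose first $\rho(X) = 2$ and let $\psi \colon X \to Z$ be the contraction of the second extremal ray $R'$. Since $R_A \cap R' = \{0\}$, no curve of $A$ is contracted by $\psi$, so $\psi|_A$ is finite onto its image. If $\psi$ is of fiber type, then $\psi(A)$ having dimension $n-1$ forces $\dim Z = n-1$; the elementary Mori contraction $\psi$ of relative dimension one is then a $\bbP^1$-bundle (Fujita--Ando), $A$ is a section of it, and the normal bundle identity forces $X \simeq \bbP(\sO_A \oplus \sO_A(-d))$, which is case (1a). If $\psi$ is divisorial with exceptional divisor $E \neq A$, the regularity of elementary contractions (Ando--Wi\'sniewski) identifies $\psi$ with the smooth blowup of a smooth Fano $X'$ (with $\rho(X') = 1$) along a smooth center $C' \subset X'$. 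The restriction $\psi|_A \colon A \to A' := \psi(A)$ is finite and birational, hence an isomorphism, so $A'$ is an ample divisor of $X'$ isomorphic to a rational homogeneous space of Picard number one. Theorem \ref{Watanabe-Homogeneous} then places $(X', A')$ on its list. Writing $\sN_{A'/X'} \simeq \sO_{A'}(s)$ and pulling back $\psi^* A' = A + E$ to $A$, the identity $\sN_{A/X} \simeq \sO_A(-d)$ forces $C' \subset A'$ and $C' \in |\sO_{A'}(d+s)|$, which is case (1b).

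If $\rho(X) = 3$, I would choose a divisorial contraction $\psi \colon X \to Y$ collapsing some $E \neq A$; the same regularity shows $\psi$ is a smooth blowup along a smooth center $C$, and produces a smooth Fano $Y$ with $\rho(Y) = 2$ carrying a divisor $A_Y := \psi(A) \simeq A$ with $\sN_{A_Y/Y} \simeq \sO_{A_Y}(s)$. Analysis of the extremal rays of $Y$ (parallel to the $\rho = 2$ case above, and using the detailed extremal-contraction analysis of \cite{CasagrandeDruel2015} to rule out a divisorial second contraction of $Y$ that would lift to $X$) shows $Y \simeq \bbP(\sO_A \oplus \sO_A(s))$ with $A_Y$ a section; the normal-bundle identity then yields $C \in |\sO_{A_Y}(d+s)|$, and the Fano condition on $Y$ becomes $-d < s < r$. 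The main technical hurdles are (i) proving that every non-$R_A$ divisorial contraction of $X$ is truly a smooth blowup along a smooth center sitting inside the strict transform of $A$, which requires invoking Ando--Wi\'sniewski-type regularity results for elementary Mori contractions on smooth Fano manifolds, and (ii) the precise bookkeeping of normal bundles and divisor classes of the centers through these birational transformations, for which I would closely follow the extremal-contraction analysis of Casagrande and Druel.
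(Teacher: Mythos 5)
Your skeleton matches the paper's: adjunction gives $0<d<r$, Tsukioka's bound gives $\rho(X)\le 3$, and the divisorial contractions are handled via a smoothness lemma plus Theorem \ref{Watanabe-Homogeneous}. But there is a genuine gap at the central point of the argument: in the fiber-type case you simply assert that ``$A$ is a section of it.'' A priori the finite surjection $\sigma\vert_A\colon A\rightarrow Y$ could be a multisection of degree $\geq 2$, and nothing in your proposal rules this out. This is exactly the difficulty the paper isolates in its introduction: by Hwang--Mok's target rigidity for rational homogeneous spaces of Picard number one \cite{HwangMok1999}, either $\sigma\vert_A$ is an isomorphism or $Y\simeq\bbP^{n-1}$, and in the second case one must still exclude a degree $\geq 2$ cover $A\rightarrow\bbP^{n-1}$. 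For $\rho(X)=2$ the paper does this with a dedicated numerical computation (Lemma \ref{Section-Criterion}, refining Tsukioka's Lemma 1) showing that the degree $e=E\cdot H^{n-1}$ equals $1$. A related inaccuracy: Ando's theorem only yields a conic bundle, not a $\bbP^1$-bundle; in the paper the $\bbP^1$-bundle structure is a consequence of $A$ being a section, not an input to that step.

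The gap is more serious at $\rho(X)=3$. There the divisor $A_Y\subset Y\simeq\bbP(\sO_Z\oplus\sO_Z(a))$ produced by Casagrande--Druel may be nef, so the numerical argument of Lemma \ref{Section-Criterion} is unavailable, and one must still show that $Z\simeq A$ and that $A_Y$ is a section rather than a multisection over $Z\simeq\bbP^{n-1}$. The paper's Proposition \ref{Nef-Divisor} handles this with the projective-extension machinery: if $A_Y\sim yH$ with $y\geq 2$, pushing down to the cone and applying Zak's inextendability criterion (via Proposition \ref{Quadric-Extension}) forces the nonvanishing of an $h^1$ of a twisted tangent bundle of $A$, which by Merkulov--Schwachh\"ofer pins the pair down to $(\bbP^2,\sO_{\bbP^2}(3))$ or $(Q^{n-1},\sO_{Q^{n-1}}(2))$; the surviving quadric case is then shown to contradict $\sN_{A/X}\simeq\sO_A(-d)$. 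None of this is ``bookkeeping''; it is the main technical content of the theorem, and your proposal contains no substitute for it. (A smaller point: ``finite and birational, hence an isomorphism'' requires the image to be normal; the paper's Lemma \ref{Smoothness-Criterion} supplies what is needed by showing that $A\cap E$ is a reduced section of the exceptional $\bbP^1$-bundle over the blow-up center.)
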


{\color{red}
\begin{remark}
	In the published version of this paper [Internat. J. Math., 2020, 31, 2050066, 14], the case (1.3) is missed in the statement. The mistake appears in the proof of Lemma \ref{Section-Criterion}, which is false in general. See Appendix \ref{corrigendum} for the correction.
\end{remark}
}

\subsection*{Acknowledgements} I would like to thank Baohua Fu for patiently answering my numerous questions. This work is supported by China Postdoctoral Science Foundation (2019M650873). I want to thank the referee for pointing out some inaccuracies in the first version.

\section{Examples and inextendability criterion}

\subsection{Examples}

In this subsection, we collect some examples of Fano manifolds.

\begin{example}\label{Example1}
	Let $(Y,D)$ be a pair where $Y$ is a Fano manifold with $\pic(Y)\simeq \bbZ\sO_Y(1)$ for some $\sO_Y(1)$ ample and $D\in \vert\sO_Y(s)\vert$ is a smooth member with $s>0$. Assume that the restriction $\pic(Y)\rightarrow\pic(D)$ is surjective. Denote by $\sO_D(1)=\sO_Y(1)\vert_D$ the ample generator of $\pic(D)$. Suppose moreover that $D$ is a Fano manifold with index $r$. For a given positive integer $d$, we choose a smooth member $B\in\vert \sO_D(d+s)\vert$ and denote by $\pi\colon X\rightarrow Y$ the blow-up of $Y$ along $B$. Let $\widetilde{D}$ be the strict transform of $D$ in $X$. Then $\widetilde{D}$ is isomorphic to $D$.
\end{example}

\begin{lemma}\label{Claim1}
	In Example \ref{Example1}, $X$ is a Fano manifold if and only if $d<r$.
\end{lemma}

\begin{proof}
	Firstly we show $\sN_{\widetilde{D}/X}\simeq \sO_{\widetilde{D}}(-d)$. Denote by $E$ the exceptional divisor of $\pi$. By assumption, we have
	\[
	\sO_D(s)\simeq\sO_Y(D)\vert_D\simeq \sO_X(\pi^*D)\vert_{\widetilde{D}}\simeq \sO_X(\widetilde{D}+E)\vert_{\widetilde{D}}
	\]
	and
	\[
	\sO_X(E)\vert_{\widetilde{D}}\simeq \sO_{D}(B)\simeq \sO_{D}(d+s).
	\]
	It follows immediately that $\sN_{\widetilde{D}/X}\simeq \sO_X(\widetilde{D})\vert_{\widetilde{D}}\simeq \sO_{\widetilde{D}}(-d)$.
	
	Next we show that $-K_X$ is ample. As $\rho(X)=2$, we can assume that $R_1$ and $R_2$ are the generators of $\NE{X}$. Moreover, by the construction, we may assume that $R_1$ is generated by the curves contained in the fibers of the blow-up $\pi\colon X\rightarrow Y$. We claim that $R_2$ is generated by curves contained in $\widetilde{D}$. Indeed, to see this, it suffices to find a non-trivial nef $\bbQ$-divisor $A$ such that $A\vert_{\widetilde{D}}\equiv 0$. We set 
	\[
	A=\widetilde{D}+\frac{d}{s}\pi^*D=\frac{d+s}{s}\widetilde{D}+\frac{d}{s}E.
	\]
	Then we have $A\vert_{\widetilde{D}}\equiv 0$. Moreover, since $\pi^*D$ is nef and big and $\widetilde{D}$ is effective, we obtain $A\cdot C\geq 0$ for any irreducible curve $C$ not contained in $\widetilde{D}$. In particular, $A$ is nef and consequently $R_2$ is generated by curves contained in $\widetilde{D}$.
	
	Finally, note that $\pi$ is an extremal contraction, we have $-K_X\cdot R_1>0$. Moreover, by the adjunction formula, we get
	\[
	\sO_X(-K_X)\vert_{\widetilde{D}}\simeq \sO_{\widetilde{D}}(-K_{\widetilde{D}})\otimes\sO_X(\widetilde{D})\vert_{\widetilde{D}}\simeq \sO_{\widetilde{D}}(r-d).
	\]
	Hence, by Kleiman's criterion, $-K_X$ is ample if and only if $d<r$.
\end{proof}

\begin{example}\label{Example2}
	Fix integers $n$ and $d$ such that $n\geq 3$. Let $Y$ be a Fano manifold of dimension $n-1$, with $\rho(Y)=1$ and index $r$. Let $\sO_Y(1)$ be the ample generator of $\pic(Y)$. Set $X\defeq \bbP(\sO_Y\oplus\sO_Y(d))$, and denote by $\pi\colon X\rightarrow Y$ the natural projection.
\end{example}

\begin{lemma}\label{Claim2}
	In Example \ref{Example2}, $X$ is a Fano manifold if and only if $-r<d<r$.
\end{lemma}

\begin{proof}
	As $\rho(X)=2$, we shall denote by $R_1$ and $R_2$ the generators of $\NE{X}$. Moreover, as $\pi$ is an extremal contraction, we may assume that $R_1$ is generated by the curves contracted by $\pi$. On the other hand, since $X$ is isomorphic to $\bbP(\sO_Y\oplus\sO_Y(-d))$, without loss of generality, we shall assume that $d\geq 0$. If $d=0$, then $X$ is isomorphic to the product $Y\times \bbP^1$, and it is clear that $X$ is a Fano manifold. Thus we may assume that $d>0$. 
	
	Let $E$ be the section of $\pi$ with normal bundle $\sN_{E/X}\simeq \sO_{E}(-d)$, and let $E'$ be a section of $\pi$ with normal bundle $\sN_{E'/X}\simeq \sO_{E'}(d)$. Then $E$ is disjoint from $E'$. In particular, we have $E'\vert_E\equiv 0$. Moreover, as $d>0$, $E'$ is nef. Therefore, $R_2$ is generated by the curves contained in $E$.
	
	As $\pi$ is an extremal contraction, we have $-K_X\cdot R_1>0$. On the other hand, by the adjunction formula, we get
	\[
	\sO_X(-K_X)\vert_E\simeq \sO_{E}(-K_E)\otimes \sO_{X}(E)\vert_E\simeq \sO_E(r-d).
	\]
	Hence, by Kleiman's criterion, $-K_X$ is ample if and only if $d<r$ .
\end{proof}

\begin{example}\label{Example3}
	Let $X\simeq \bbP(\sO_Y\oplus\sO_Y(d))$ be a Fano manifold as in Example \ref{Example2}, and let $E$ be a section of $\pi\colon X\rightarrow Y$ with normal bundle $\sN_{E/X}\simeq \sO_E(d)$. Suppose that $B$ is a smooth hypersurface of $E$ such that $\sO_E(B)\simeq \sO_{E}(d')$ for some $d'>0$. Denote by $\sigma\colon W\rightarrow X$ the blow-up of $X$ along $B$. 
\end{example}

\begin{lemma}\label{Claim3}
	In Example \ref{Example3}, $W$ is a Fano manifold if and only if $0<d'<r+d$, where $r$ is the index of $Y$.
\end{lemma}

\begin{remark}
	This result is proved in the case $d\geq 0$ by Casagrande and Druel in \cite[Example 3.4 and Lemma 3.5]{CasagrandeDruel2015} and a slight modification of the argument can be applied to the case $d<0$. We include a proof for the reader's convenience.
\end{remark}

\begin{proof}
	If $d\geq 0$, it is proved in \cite[Lemma 3.5 and Remark 3.6]{CasagrandeDruel2015}. Thus we may assume that $d<0$. Denote by $B_X$ the divisor $\pi^{-1}(\pi(B))$ and denote by $G$ the exceptional divisor of $\sigma\colon W\rightarrow X$. Let $E_W$ (resp. $B_W$) be the strict transform of $E$ (resp. $B_X$) in $W$. Note that $B_W$ is isomorphic to $B$. Let $l_B\subset B_W$ be a fiber of the contraction $B_W\rightarrow B_X\rightarrow \pi(B)$. Then we obtain
	\begin{center}
		$B_W\cdot l_B=(\sigma^*B_X-G)\cdot l_B=-1$ and $-K_W\cdot l_B=(-\sigma^*K_X-G)\cdot l_B=1$.
	\end{center}
    Moreover, let $A$ be an ample divisor on $Y$. Then $\sigma^*\pi^*A\cdot l_B=0$. As a consequence, we can find an extremal ray $R_W$ of $\NE{W}$ such that 
    \begin{center}
    	$B_W\cdot R_W<0$ and $\sigma^*\pi^*A\cdot R_W=0$. 
    \end{center}
    Then one can easily see that $R_W$ is actually generated by $[l_B]$. Let $\widehat{\sigma}\colon W\rightarrow \widehat{X}$ be the associated contraction. Then both $\widehat{X}$ and $\widehat{\sigma}(B_W)$ are smooth and $\widehat{\sigma}\colon W\rightarrow \widehat{X}$ is the blow-up of $\widehat{X}$ along the codimension $2$ submanifold $\widehat{\sigma}(B_W)$ (see \cite[Theorem 1.2]{Wisniewski1991}).
    
    As $\rho(\widehat{X})=2$, $\NE{\widehat{X}}$ is generated by two rays $R_1$ and $R_2$. In the following, we give a detail description of these two rays. Denote by $E_W$ and $E_{\widehat{X}}$ the strict transforms of $E$ in $W$ and $\widehat{X}$, respectively. Then $E_W$ and $E_{\widehat{X}}$ are isomorphic to $E$. Moreover, as $E_W$ is disjoint from $B_W$, we get
    \[
    \sO_{\widehat{X}}(E_{\widehat{X}})\vert_{E_{\widehat{X}}}\simeq \sO_{W}(E_W)\vert_{E_W}\simeq \sO_W(\sigma^*E-G)\vert_{E_W}\simeq \sO_E(d-d')
    \]
    and
    \[
    \sO_{\widehat{X}}(-K_{\widehat{X}})\vert_{E_{\widehat{X}}}\simeq \sO_{W}(-K_W)\vert_{E_{W}}\simeq \sO_{W}(-\sigma^*K_X-G)\vert_{E_W}\simeq \sO_E(r+d-d').
    \]
    By assumption, we have $r+d-d'<0$ and $d-d'<0$. As consequence, the ray generated by curves contained in $E_{\widehat{X}}$ is an extremal ray of $\NE{\widehat{X}}$ and we will denote it by $R_1$. On the other hand, let $l$ be a general fiber of $\pi\colon X\rightarrow Y$. Then the birational map 
    \[
    \sigma^{-1}\circ \widehat{\sigma}\colon X\dashrightarrow \widehat{X}\]
    is an isomorphism in a neighborhood of $l$. Denote its image by $\widehat{l}$. Then we have
    \[-K_{\widehat{X}}\cdot \widehat{l}=-K_X\cdot l=2.\]
    On the other hand, let $G_{\widehat{X}}$ be the strict transform of $G$ in $\widehat{X}$. Then we get
    \[
    \sO_{\widehat{X}}(G_{\widehat{X}})\vert_{G_{\widehat{X}}}\simeq \sO_W(\widehat{\sigma}^*G_{\widehat{X}})\vert_{G}\simeq \sO_W(G+B_W)\vert_{G}\simeq \sO_W(\sigma^*B_X)\vert_G.
    \]
    As a consequence, $\sO_{\widehat{X}}(G_{\widehat{X}})\vert_{\widehat{G}}$ is nef and so is $\sO_{\widehat{X}}(G_{\widehat{X}})$. Since $G_{\widehat{X}}$ is disjoint from $\widehat{l}$, we have $G_{\widehat{X}}\cdot\widehat{l}=0$. Hence, $R_2$ is generated by $[\widehat{l}]$ and it is an extremal ray of $\NE{\widehat{X}}$. In particular, $\widehat{X}$ is a Fano manifold. 
    
    Let $\widehat{\pi}$ be the extremal contraction associated to $R_2$. Then $\widehat{\pi}$ is of fiber type since the strict transform of every general fiber of $\pi\colon X\rightarrow Y$ is contracted by $\widehat{\pi}$. Moreover, as $E_{\widehat{X}}\cdot \widehat{l}=1$, it follows that $\widehat{\pi}(\widehat{X})=\widehat{\pi}(E_{\widehat{X}})\simeq Y$ and $\widehat{\pi}$ is a $\bbP^1$-bundle (see \cite[Lemma 2.12]{Fujita1987}). In particular, we have the following factorization:
    \[\xymatrix{
    	&   W\ar[dl]_{\widehat{\sigma}}\ar[dr]^{\sigma}\ar[dd]^{\varphi} &       \\
    	\widehat{X}\ar[dr]_{\widehat{\pi}} &                & X\ar[dl]^{\pi}\\
    	& Y &    
    }\]
	Thanks to \cite[Lemma 3.9]{CasagrandeDruel2015}, $\widehat{X}$ is isomorphic to $\bbP(\sO_Y\oplus\sO_Y(d'-d))$. Let $E'$ be a section of $\pi\colon X\rightarrow Y$ with $\sN_{E'/X}\simeq \sO_{E'}(-d)$. Then one can see that the strict transform $E'_{\widehat{X}}$ of $E'$ in $\widehat{X}$ is a section of $\widehat{\pi}\colon \widehat{X}\rightarrow Y$ containing $\widehat{\sigma}(B_W)$ such that 
	\[
	\sO_{E'_{\widehat{X}}}(\widehat{\sigma}(B_W))\simeq \sO_{X}(B_X)\vert_{E'}\simeq \sO_{E'}(d').
	\]
	Then applying \cite[Lemma 3.5 and Remark 3.6]{CasagrandeDruel2015} to $\widehat{X}$ shows that $W$ is a Fano manifold if and only if $d'-d<r$.
\end{proof}

\begin{remark}
	In Theorem \ref{Classification-Negative-Divisors}, the restriction $\sO_A(-K_X)\simeq \sO_A(r-d)$ is ample, thus we have always $0<d<r$. In particular, the examples given above show that the projective manifolds provided in Theorem \ref{Classification-Negative-Divisors} are indeed Fano manifolds.
\end{remark}

\subsection{Projective extension}

Recall that an irreducible non-degenerate smooth projective variety $X\subset\bbP^N$ is called \emph{projectively extendable} if there exists a projective variety $X'\subset\bbP^{N+1}$ and a hyperplane $H\subset\bbP^{N+1}$ such that $H$ intersects $X'$ transversely, $H\cap X'=X$ and $X'$ is not a cone. In this case, $X'$ is called a \emph{projective extension} of $X$. The following inextendability criterion due to Zak is very useful (see also \cite[Theorem 0.1 and Corollary 1]{Lprimevovsky1992}).

\begin{thm}\cite[Corollary 3]{Zak1991}\label{inextendability}
	If $X\subset\bbP^N$ is an irreducible, non-degenerate, smooth projective variety such that $h^1(X,T_X(-1))=0$, then either $X$ is a twisted cubic curve or a quadric or $X$ is inextendable.
\end{thm}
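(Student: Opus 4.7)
The plan is to argue by contradiction. Suppose $X\subset\bbP^N$ admits a projective extension $X'\subset\bbP^{N+1}$ which is not a cone, so that there is a hyperplane $H\subset\bbP^{N+1}$ meeting $X'$ transversely with $H\cap X'=X$. The strategy is to exploit the cohomological hypothesis to produce a vector field on $X'$ whose flow is a $\bbC^*$-action contracting $X'$ onto a single point, thereby forcing $X'$ to be a cone and yielding a contradiction.

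First I would set up the basic exact sequence. Because $H$ cuts $X'$ transversely along $X$, one has $\sN_{X/X'}\simeq \sO_X(1)$, the restriction of $\sO_{\bbP^{N+1}}(1)$. Twisting the tangent sequence of $X$ in $X'$ by $\sO_X(-1)$ yields
\[
0 \longrightarrow T_X(-1) \longrightarrow T_{X'}(-1)|_X \longrightarrow \sO_X \longrightarrow 0.
\]
The associated long exact sequence produces a connecting homomorphism $\delta\colon H^0(X,\sO_X)\to H^1(X,T_X(-1))$ sending $1$ to the class of this extension. Under the hypothesis $H^1(X,T_X(-1))=0$, necessarily $\delta=0$, and one obtains a section $v\in H^0(X,T_{X'}(-1)|_X)$ projecting to the constant $1$ in the quotient; this is an Euler-type vector field along $X$ in $X'$ that is everywhere transverse to $X$.

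The next step is to promote $v$ to a genuine regular vector field on $X'$. I would lift $v$ order by order along the formal neighborhood of $X$ in $X'$, tracking obstructions (which live in higher cohomology groups involving further twists of $T_X$ and $\sN_{X/X'}$), and then algebraize the resulting formal vector field using the projectivity of $X'$ via GAGA. The flow of the global vector field so obtained is a $\bbC^*$-action on $X'$ with an isolated fixed point, whose orbits sweep out $X'$ starting from $X$; this forces $X'$ to be the projective cone over $X$ with vertex that fixed point, contradicting the non-cone hypothesis on $X'$.

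The main obstacle is the treatment of the exceptional cases, where the cohomological hypothesis is satisfied yet $X$ is extendable. A smooth quadric $Q^n\subset\bbP^{n+1}$ ($n\geq 3$) has $h^1(Q^n,T_{Q^n}(-1))=0$ (as one checks by chasing the Euler and normal sequences, using that the partials of the defining quadric span $H^0(\sO_{Q^n}(1))$), yet extends non-trivially to $Q^{n+1}\subset\bbP^{n+2}$, and the twisted cubic in $\bbP^3$ likewise satisfies the hypothesis and extends to the rational normal cubic scroll in $\bbP^4$. Hence the lifting/algebraization argument sketched above must genuinely fail for exactly these varieties: either a higher-order obstruction in the formal geometry is nonzero, or the formal vector field is not algebraic, or the $\bbC^*$-action produced has a positive-dimensional fixed locus rather than a single point. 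Pinpointing the precise mechanism---most plausibly via a Wahl-type Gaussian map obstruction that is nonzero for every $X$ other than these exceptional varieties---and executing the algebraization cleanly in all remaining cases is the delicate, non-formal core of the argument.
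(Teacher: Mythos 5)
The paper does not prove this statement: it is quoted verbatim as \cite[Corollary 3]{Zak1991}, so there is no internal proof to compare yours against, and any assessment has to be of your argument on its own terms against Zak's actual result.

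Your starting point is the correct germ of the standard (Zak--Lvovski) argument: since $H$ meets $X'$ transversely along $X$, one has $\sN_{X/X'}\simeq\sO_X(1)$, and the vanishing of $H^1(X,T_X(-1))$ splits the sequence $0\to T_X(-1)\to T_{X'}(-1)\vert_X\to\sO_X\to 0$ at the level of $H^0$, producing a distinguished section of $T_{X'}(-1)\vert_X$ (equivalently, a section of $\sN_{X/\bbP^N}(-1)$ in the image of the ``constant'' fields $H^0(T_{\bbP^N}(-1)\vert_X)$). The continuation, however, has genuine gaps. First, a section of $T_{X'}(-1)\vert_X$ is a twisted vector field, not a vector field, and the order-by-order lifting to the formal neighborhood of $X$ in $X'$ meets obstructions in groups of the form $H^1(X,T_{X'}(-k)\vert_X)$, $k\geq 2$, whose vanishing does not follow from the hypothesis; you name the obstructions but never kill them. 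Second, even granting a global regular vector field on $X'$ transverse to $X$, its flow generates a one-parameter subgroup of $\Aut^0(X')$ that need not be a $\bbC^*$-action, need not have an isolated fixed point, and does not by itself force $X'$ to be a cone.

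The decisive problem is the one you concede at the end: your mechanism, as written, applies verbatim to $Q^n\subset\bbP^{n+1}$ and to the twisted cubic, both of which satisfy $h^1(T_X(-1))=0$ and both of which extend; so the argument would prove a false statement, and saying that it ``must genuinely fail'' for exactly these varieties, without locating where, is not a proof. The actual reason the quadric escapes is identifiable and instructive: for $Q^{n+1}=\{q+x_{N+1}^2=0\}$ the first-order datum of the extension along $H=\{x_{N+1}=0\}$, namely $\partial(q+x_{N+1}^2)/\partial x_{N+1}\vert_{X}$, is identically zero, i.e.\ the extension agrees with the cone to first order. Hence no purely first-order cohomological criterion of the kind you set up can distinguish it from a cone, and Zak's proof (and Lvovski's refinement) must supplement the cohomological count $h^0(\sN_{X/\bbP^N}(-1))\leq N+1$ with a separate analysis of the degenerate cases --- quadrics and curves of low degree --- where the correspondence between extensions and sections of $\sN_{X/\bbP^N}(-1)$ fails to be injective. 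Supplying that analysis is precisely the missing, non-formal core of the theorem.
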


Let $X$ be a projective variety. Recall that a line bundle $\sL$ over $X$ is called \emph{simply generated} if the graded algebra 
\[R(X,\sL)\defeq\bigoplus_{m\geq 0} H^0(X,\sL^{\otimes m})\]
is generated by $H^0(X,\sL)$ as a $\bbC$-algebra. Moreover,  {\color{red}a line bundle $\sL$ is very ample if $\sL$ is ample and simply generated.} Using this notion we have the following useful very ampleness criterion.

\begin{prop}\label{Ampleness-Criterion}
	Let $X$ be a normal projective and let $\sL$ be an ample line bundle on $X$. Suppose that $D\in \vert \sL\vert$ is a member which is irreducible and reduced as a subscheme of $X$. If $h^1(X,\sO_X)=0$ and {\color{red}$\sL\vert_D$ is simply generated}, then $\sL$ is very ample.
\end{prop}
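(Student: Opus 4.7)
The plan is to exploit the characterization recalled just before the statement: since $\sL$ is already assumed ample, proving very ampleness reduces to showing that $\sL$ is simply generated, i.e.\ that the multiplication map $\Sym^m H^0(X,\sL) \to H^0(X,\sL^{\otimes m})$ is surjective for every $m \geq 1$. I would proceed by induction on $m$, the case $m=1$ being trivial. The workhorse is the short exact sequence obtained by tensoring $0 \to \sO_X(-D) \to \sO_X \to \sO_D \to 0$ with $\sL^{\otimes m}$ and using $\sO_X(D)\simeq \sL$, namely
$$0 \to \sL^{\otimes(m-1)} \to \sL^{\otimes m} \to \sL^{\otimes m}|_D \to 0.$$

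For the inductive step, take any $s\in H^0(X,\sL^{\otimes m})$. Its restriction $s|_D$ lies in $H^0(D,(\sL|_D)^{\otimes m})$, and since $\sL|_D$ is very ample (hence simply generated) it is a sum of products of $m$ sections in $H^0(D,\sL|_D)$. The key lifting input is the case $m=1$ of the above sequence combined with the hypothesis $h^1(X,\sO_X)=0$: the long exact sequence yields a surjection $H^0(X,\sL) \twoheadrightarrow H^0(D,\sL|_D)$. Lifting each factor and taking the corresponding product in $H^0(X,\sL^{\otimes m})$ produces a section $s'$ that lies in the image of $H^0(X,\sL)^{\otimes m}$ and satisfies $s'|_D=s|_D$. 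Then $s-s'$ vanishes on $D$, so $s-s'=\sigma\cdot u$ with $\sigma\in H^0(X,\sL)$ the section defining $D$ and $u\in H^0(X,\sL^{\otimes(m-1)})$. By the inductive hypothesis $u$ is a sum of products of $m-1$ sections of $H^0(X,\sL)$, hence $s=s'+\sigma\cdot u$ lies in the image of $H^0(X,\sL)^{\otimes m}$, closing the induction.

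I do not foresee a serious obstacle; the argument is essentially a standard induction driven by the vanishing $h^1(X,\sO_X)=0$. The minor subtleties to check are that $D$ being irreducible and reduced guarantees the Cartier divisor $D$ is cut out by an honest section $\sigma\in H^0(X,\sL)$ and that the identification $\sL^{\otimes m}\otimes\sO_X(-D)\simeq \sL^{\otimes(m-1)}$ is correct, both of which are immediate from $D\in|\sL|$; and that the restriction $\sL^{\otimes m}|_D$ commutes with taking products of sections, which is automatic from the ring structure. Normality of $X$ is not used directly in the argument, but ensures that the formalism of Cartier divisors and sections of line bundles behaves as expected.
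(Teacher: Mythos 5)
Your argument is correct and follows essentially the same route as the paper: reduce very ampleness to simple generation, use $h^1(X,\sO_X)=0$ to get surjectivity of the restriction $H^0(X,\sL)\to H^0(D,\sL\vert_D)$, and then climb the ladder from $D$ to $X$. The only difference is that you carry out the ladder step by the standard induction on $m$, whereas the paper outsources exactly this step to \cite[Chapter I, Corollary 2.5]{Fujita1990}; both versions rest on the same fact, recalled just before the statement, that $\sL\vert_D$ very ample implies $\sL\vert_D$ simply generated.
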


\begin{proof}
	It suffices to prove that $\sL$ is simply generated. As $h^1(X,\sO_X)=0$, the restriction map $H^0(X,\sL)\rightarrow H^0(D,\sL\vert_D)$ is surjective. Since $\sL\vert_D$ is simply generated, then \cite[Chapter I, Corollary 2.5]{Fujita1990} says that $\sL$ is itself simply generated.
\end{proof}

As a consequence of Proposition \ref{Ampleness-Criterion}, one can easily derive the following variant of Theorem \ref{inextendability}.

\begin{prop}\label{Quadric-Extension}
	Let $X$ be a normal projective variety of dimension $n\geq 3$, and let $\sL$ be an ample line bundle over $X$. Suppose that $h^1(X,\sO_X)=0$ and $Y\in \vert \sL\vert$ is a scheme-theoretically smooth member with {\color{red}$\sL\vert_Y$ simply generated}. If $h^1(Y,T_Y\otimes \sL^*\vert_Y)=0$, then one of the following statements holds.
	\begin{enumerate}
		\item The map $\Phi$ defined by the complete linear system $\vert \sL\vert$ is an embedding which sends $X$ to a cone over $\Phi(Y)$.
		
		\item The pair $(Y,\sL\vert_Y)$ is isomorphic to $(Q^{n-1}, \sO_{Q^{n-1}}(1))$, where $Q^{n-1}$ is a quadric hypersurface of dimension $n-1$.
	\end{enumerate}
    Furthermore, suppose in addition that $X$ is smooth, then the pair $(X,\sL)$ is isomorphic to either $(\bbP^{n},\sO_{\bbP^{n}}(1))$ or $(Q^{n},\sO_{Q^n}(1))$.
\end{prop}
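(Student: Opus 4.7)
The plan is to use $\sL$ to embed $X$ in projective space via Proposition \ref{Ampleness-Criterion}, and then apply Zak's inextendability criterion (Theorem \ref{inextendability}) to the hyperplane section $Y$. Since $\sL|_Y$ is very ample, $h^1(X,\sO_X)=0$, and $Y\in|\sL|$ is smooth (hence irreducible and reduced), Proposition \ref{Ampleness-Criterion} gives that $\sL$ itself is very ample, so $\Phi\colon X\hookrightarrow\bbP^N$ is an embedding with $N+1=h^0(X,\sL)$. Twisting the ideal sequence of $Y$ by $\sL$ yields $0\to\sO_X\to\sL\to\sL|_Y\to 0$, and $h^1(X,\sO_X)=0$ makes the restriction $H^0(X,\sL)\to H^0(Y,\sL|_Y)$ surjective, so $Y$ is embedded by $|\sL|_Y|$ as a non-degenerate subvariety of the hyperplane $H\cong\bbP^{N-1}$ cutting it out, and $\Phi(X)\cap H=Y$ is a transverse smooth hyperplane section (normality of $X$ together with smoothness of the Cartier divisor $Y$ forcing $X$ to be smooth along $Y$).

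Next I would apply Zak's criterion to $Y\subset\bbP^{N-1}$: the hypothesis $h^1(Y,T_Y\otimes\sL^*)=0$ is exactly $h^1(Y,T_Y(-1))=0$ in this embedding. Since $\dim Y=n-1\geq 2$, $Y$ is not a twisted cubic, so either $(Y,\sL|_Y)\simeq(Q^{n-1},\sO_{Q^{n-1}}(1))$, which is case (2), or $Y$ is projectively inextendable. In the inextendable case, $\Phi(X)\subset\bbP^N$ is an irreducible variety whose transverse smooth hyperplane section is $Y$, and the very definition of projective extension forces $\Phi(X)$ to be a cone. A vertex $v$ of this cone lies in the singular locus of $\Phi(X)$ whenever $\Phi(X)$ is not a linear subspace (and can be freely chosen off $H$ otherwise), so smoothness of $Y$ allows us to pick $v\notin H$; projection from $v$ then identifies $Y$ with the base of the cone, showing that $\Phi(X)$ is the cone over $\Phi(Y)$, which is case (1).

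Finally, suppose $X$ is smooth. In case (1), $\Phi(X)$ is a smooth projective cone; since any cone over a positive-dimensional non-linear base is singular at its vertex, $\Phi(X)$ must be a linearly embedded $\bbP^n$, forcing $Y\simeq\bbP^{n-1}$ and $(X,\sL)\simeq(\bbP^n,\sO_{\bbP^n}(1))$. In case (2), $\sL|_Y$ embeds $Y=Q^{n-1}$ into $\bbP^n$, so $N=n+1$ and $\Phi(X)\subset\bbP^{n+1}$ is a smooth, non-degenerate variety of dimension $n$ and degree $\sL^n=(\sL|_Y)^{n-1}=2$; such a variety is a smooth quadric, giving $(X,\sL)\simeq(Q^n,\sO_{Q^n}(1))$. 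I expect the main subtlety to be the cone analysis of the second paragraph, specifically confirming that the vertex can be chosen outside $H$ and that the base of the cone is genuinely $\Phi(Y)$ rather than merely an abstractly isomorphic variety.
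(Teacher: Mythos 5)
Your proposal is correct and follows essentially the same route as the paper: use Proposition \ref{Ampleness-Criterion} to get very ampleness of $\sL$, use $h^1(X,\sO_X)=0$ to see that $\Phi(X)$ is either a cone over $\Phi(Y)$ or a projective extension of it, and then invoke Zak's criterion to force the quadric alternative in the extendable case. Your extra details (the vertex lying off the hyperplane, and the degree-two argument identifying $X$ as a quadric in the smooth case, where the paper simply cites Theorem \ref{Watanabe-Homogeneous}) are correct refinements of the same argument.
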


\begin{proof}
	Since $Y$ is an ample divisor on $X$ and $n\geq 3$, $Y$ is connected. It follows that $Y$ is irreducible for $Y$ being smooth. Then, by Proposition \ref{Ampleness-Criterion}, $\sL$ is a very ample line bundle over $X$. 
	
	Denote by $\Phi$ the embedding defined by the complete linear system $\vert \sL\vert$. As $h^1(X,\sO_X)=0$, we have $h^0(X,\sL)=h^0(Y,\sL\vert_Y)+1$. In particular, as $Y$ is smooth, there exists a hyperplane $H$ of $\bbP(H^0(X,\sL))$ such that $H$ intersects $\Phi(X)$ transversely and $\Phi(Y)=H\cap \Phi(X)$. Therefore, by the definition of projective extension,
	either $\Phi(X)$ is a cone over $\Phi(Y)$ or $\Phi(X)$ is a projective extension of $\Phi(Y)$.
	
	According to Theorem \ref{inextendability}, if $\Phi(X)$ is a projective extension of $\Phi(Y)$, then $\Phi(Y)$ is a quadric hypersurface in $\bbP(H^0(Y,\sL\vert_Y))\simeq \bbP^n$ because $h^1(Y,T_Y\otimes \sL^*\vert_Y)=0$. 
	
	Suppose now that $X$ is smooth. If case (1) holds, then $\Phi(X)$ is smooth if and only if $\Phi(Y)$ is a projective space. In particular, the pair $(Y,\sL\vert_Y)$ is isomorphic to $(\bbP^{n-1},\sO_{\bbP^{n-1}}(1))$ and $(X,\sL)$ must be isomorphic to $(\bbP^n,\sO_{\bbP^n}(1))$. If case (2) holds, then it is well known that $(X,\sL)$ is isomorphic to $(Q^n,\sO_{Q^n}(1))$ (see for instance Theorem \ref{Watanabe-Homogeneous}).
\end{proof}

\section{Proof of Theorem \ref{Classification-Negative-Divisors}}

This section is devoted to prove Theorem \ref{Classification-Negative-Divisors}. Since $A$ is a negative divisor on $X$, we get $\rho(X)\geq 2$. On the other hand, as mentioned in the introduction, according to \cite[Proposition 5]{Tsukioka2006}, we also have $\rho(X)\leq 3$. 

\subsection{Case $\rho(X)=2$}

The proof of the following lemmas can be adapted from that of \cite[Lemma 1 and Lemma 2]{Tsukioka2006}, and follows some strategies used in \cite{BonaveroCampanaWisniewski2002}.

\begin{lemma}\label{Smoothness-Criterion}
	Let $\pi\colon X\rightarrow Y$ be the blow-up of a projective manifold $Y$ along an irreducible smooth center $C$ of codimension $2$. Suppose that $A\subset X$ is a smooth irreducible hypersurface such that $\pic(A)\simeq\bbZ\sO_A(1)$ and there exists a birational morphism $\varphi\colon X\rightarrow Y_0$ onto a projective variety sending $A$ to a point. Then the restriction $\pi\vert_A\colon A\rightarrow \pi(A)$ is an isomorphism.
\end{lemma}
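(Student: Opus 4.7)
Let $F\defeq\Exc(\pi)$ be the exceptional divisor of $\pi$, so that $F\simeq\bbP(\sN^*_{Z/Y})$ is a $\bbP^1$-bundle over $Z$ with $\sO_F(F)\cong\sO_{\bbP(\sN^*_{Z/Y})}(-1)$; each $\pi$-fibre $\ell$ then satisfies $F\cdot\ell=-1$ in $X$. The overall plan is to prove $E\cap F=\emptyset$, after which $\pi\vert_E$ is simply the restriction of $\pi$ to the open subset $X\setminus F$ on which $\pi$ is an isomorphism, and hence a closed immersion onto its image $\pi(E)$.

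I first verify $E\neq F$: otherwise $\pic(E)=\pic(F)$ would contain the subgroup $(\pi\vert_F)^*\pic(Z)\oplus\bbZ\,\sO_F(1)$ of rank at least two, contradicting $\pic(E)\cong\bbZ$. Hence $\pi\vert_E\colon E\to\pi(E)$ is a birational morphism.

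Suppose, for contradiction, that $E\cap F\neq\emptyset$. Since $E$ and $F$ are two distinct prime divisors on the smooth variety $X$, the scheme $E\cap F$ is a non-zero effective Cartier divisor on $E$; therefore $F\vert_E\in\pic(E)\cong\bbZ\sO_E(1)$ is a positive multiple of the ample generator, and in particular $F\vert_E$ is ample on $E$. As an immediate consequence, no $\pi$-fibre $\ell$ can lie in $E$, for otherwise $F\vert_E\cdot\ell=F\cdot\ell=-1<0$ would contradict the ampleness of $F\vert_E$. Writing $[E]=\pi^*D+mF$ in $\pic(X)=\pi^*\pic(Y)\oplus\bbZ[F]$, the identity $E\cdot\ell=-m$ together with $\ell\not\subset E$ forces $m\leq 0$, and $\pi\vert_E$ becomes a finite birational morphism.

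The final contradiction uses the second morphism $\varphi$. Because $\varphi$ contracts $E$ to a point, the negativity property of divisors contracted by a proper birational morphism yields that $-E\vert_E$ is ample on $E$. Restricting $[E]=\pi^*D+mF$ to $E$ yields, in $\pic(E)\cong\bbZ$,
\[
E\vert_E=(\pi\vert_E)^*\bigl(D\vert_{\pi(E)}\bigr)+m\cdot F\vert_E,
\]
and the simultaneous sign constraints forced by the ampleness of $F\vert_E$ and $-E\vert_E$, together with the bound $m\leq 0$ and the positivity of $(\pi\vert_E)^*(D\vert_{\pi(E)})$ coming from the finite birationality of $\pi\vert_E$, produce an inconsistency in this rank-one group. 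This contradiction yields $E\cap F=\emptyset$ and completes the proof.

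The main obstacle is the last step: converting the qualitative statement ``$\varphi$ contracts $E$ to a point'' into a quantitative sign constraint via the negativity lemma, and showing that the resulting inequalities in the rank-one group $\pic(E)$ are incompatible with $E\cap F\neq\emptyset$ and with $m\leq 0$. Once this numerical incompatibility is extracted, the lemma follows at once.
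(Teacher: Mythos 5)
Your proof has a fatal structural flaw: the statement you set out to prove by contradiction, namely $E\cap F=\emptyset$, is simply false in the situations the lemma is designed for. In every application of this lemma in the paper (the proof of Theorem \ref{Classification-Negative-Divisors}), the blow-up center is a hypersurface $C$ contained in the divisor $A_Y=\pi(A)$, so the strict transform $E=A$ necessarily meets the exceptional divisor $F$ (in a section of $F\to Z$ over $C$), and yet all hypotheses of the lemma hold. Consistently with this, the ``numerical incompatibility'' you invoke at the end does not exist: writing $E\vert_E=(\pi\vert_E)^*(D\vert_{\pi(E)})+m\,F\vert_E$ with $m<0$, the term $(\pi\vert_E)^*(D\vert_{\pi(E)})$ carries no a priori sign, since $D\vert_{\pi(E)}$ is essentially the normal bundle of $\pi(E)$ in $Y$ and $Y$ is only assumed to be a projective manifold (in the paper's own applications it can be negative); and even if it were ample, the sum of an ample class and a negative multiple of an ample class in the rank-one group $\pic(E)\otimes\bbQ$ can perfectly well be anti-ample, which is exactly what $-E\vert_E$ ample requires. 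So the three constraints ($F\vert_E$ ample, $-E\vert_E$ ample, $m<0$) are mutually consistent and no contradiction can be extracted.

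The correct route — and the one the paper takes — is to handle the case $E\cap F\neq\emptyset$ directly rather than exclude it. Set $W=(E\cap F)_{\red}$; since $\varphi$ contracts $E$ to a point, $\varphi\vert_F$ contracts $W$ to a point, and \cite[Lemma 3.9]{CasagrandeDruel2015} applied to the $\bbP^1$-bundle $F\to Z$ shows that $W$ is a section of $F\to Z$ with negative normal bundle in $F$; comparing with the fact that $\sO_E(W)$ is ample (as $W$ is a nonzero effective divisor on $E$ and $\pic(E)=\bbZ\sO_E(1)$) one concludes that the scheme $E\cap F$ equals $W$, i.e.\ is a reduced section. From this it follows that each $\pi$-fibre meets $E$ transversally in a single point, so $\pi\vert_E$ is injective with injective differential, hence a closed immersion onto $\pi(E)$. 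Your intermediate observation that no $\pi$-fibre lies in $E$ (so $\pi\vert_E$ is finite and birational) is correct and is a useful first step, but by itself it does not yield an isomorphism without knowing that $\pi(E)$ is normal; the reducedness of $E\cap F$ is the missing ingredient, and it is obtained from the contraction $\varphi$, not from a sign count in $\pic(E)$.
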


\begin{proof}
	Denote by $E=\pi^{-1}(C)$ the exceptional divisor of $\pi$. If $A$ is disjoint from $E$, it is clear that $\pi\vert_A\colon A\rightarrow \pi(A)$ is an isomorphism. Now we shall asume that $E\cap A$ is not empty. Set $W\defeq (A\cap E)_{\red}$. Then the restriction $\varphi\vert_E\colon E\rightarrow \varphi(E)$ sends $W$ to a point. By \cite[Lemma 3.9]{CasagrandeDruel2015}, $W$ is a section of the $\bbP^1$-bundle $\pi\vert_E\colon E\rightarrow C$ with conormal bundle $\sN^*_{W/E}$ ample. On the other hand, as $\pic(A)\simeq\bbZ\sO_A(1)$ and $W$ is effective, the line bundle $\sN_{W/A}\simeq \sO_A(W)$ is ample. In particular, $\sN_{W/A}$ is different from $\sN_{W/E}$ and consequently $W$ is generically reduced. As $W$ is Cohen-Macaulay, $W$ is actually reduced. In particular, we have $W=A\cap E$. Hence, the restriction map $\pi\vert_A\colon A\rightarrow \pi(A)$ is an isomorphism. 
\end{proof}

\begin{lemma}\label{Section-Criterion}
	Let $X$ be a Fano manifold of dimension $n\geq 3$ and with $\rho(X)=2$, and let $A$ be a smooth Fano hypersurface of $X$ such that $\pic(A)\simeq \bbZ\sO_A(1)$ for some ample line bundle $\sO_A(1)$ and $\sN_{A/X}\simeq\sO_A(-d)$ for some $d>0$. Assume furthermore that $(A,\sO_A(1))$ is covered by lines, i.e. for any point $x\in A$, there exists a rational curve $C$ passing through $x$ such that $c_1(\sO_A(1))\cdot C=1$. If $X$ admits an extremal contraction $f\colon X\rightarrow \bbP^{n-1}$, which is a conic bundle, such that $f$ is finite over $A$, then $A$ is a section of $f$. In particular, $f$ is a $\bbP^1$-bundle and $A$ is isomorphic to $\bbP^{n-1}$.
\end{lemma}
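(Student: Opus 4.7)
The plan is to use $\rho(X)=2$ to locate the second extremal ray of $\NE{X}$ inside $E$, and then to exploit numerical constraints coming from the conic bundle structure and adjunction to force $f|_E\colon E\to\bbP^{n-1}$ to be an isomorphism.

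First I will identify the second extremal ray. Let $R_1=\bbR_{\geq 0}[\ell]$ be the ray contracted by $f$. For any curve $C\subset E$ one has $E\cdot C=\sN_{E/X}\cdot C=-d\,(\sO_E(1)\cdot C)<0$, so $E$ is numerically negative on every curve it contains. Since $\pic(E)$ has rank one, all such curves are proportional in $N_1(X)$, and as $f|_E$ is finite they do not lie in $R_1$; hence they span the other extremal ray $R_2$. Let $g\colon X\to Y$ be the corresponding contraction. Because $\pic(E)\simeq\bbZ$, a positive-dimensional image $g(E)$ would contain a curve whose preimage in $E$ is positive-dimensional and would therefore contain a curve of $E$ not contracted by $g$, a contradiction. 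Hence $g$ sends $E$ to a point, so $g$ is a divisorial contraction with exceptional locus $E$.

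Set $L=f^*\sO_{\bbP^{n-1}}(1)$, write $L|_E=\sO_E(a)$ with $a\geq 1$, and pick $C_0\subset E$ with $\sO_E(1)\cdot C_0=1$, so $R_2=\bbR_{\geq 0}[C_0]$. Put $d_0\defeq E\cdot\ell=\deg(f|_E)$. The intersection relations $L\cdot\ell=0$, $L\cdot C_0=a$, $E\cdot\ell=d_0$, $E\cdot C_0=-d$, together with $-K_X\cdot\ell=2$ (conic bundle) and the adjunction identity $-K_X|_E=(r-d)\sO_E(1)$, give $-K_X\cdot C_0=r-d$; writing $-K_X=\alpha L+\beta E$ in $N^1(X)_{\bbQ}$ therefore yields $\beta d_0=2$. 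The heart of the argument is to deduce $d_0=1$. My plan is to use the decomposition $\pic(X)=g^*\pic(Y)\oplus\bbZ[E]$ arising from the elementary divisorial contraction $g$ to force $\beta\in\bbZ$, hence $d_0\in\{1,2\}$, and then to exclude $d_0=2$ by examining the degree-two cover $f|_E$ it would produce: the very rigid numerics linking $a$, $r$, $d$ through the intersection table and the Fano condition on $X$ can be shown to be incompatible with $E$ being a smooth Fano hypersurface satisfying $\pic(E)\simeq\bbZ$ and $\sN_{E/X}\simeq\sO_E(-d)$. This exclusion of $d_0=2$ is the main obstacle.

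Once $d_0=1$, the map $f|_E$ is a finite birational morphism between smooth projective varieties, hence an isomorphism; in particular $E\simeq\bbP^{n-1}$ and $E$ is a section of $f$. Finally, if $f$ admitted a reducible fiber $\ell=\ell_1+\ell_2$, then since both $\ell_i$ lie in $R_1$ and $-K_X\cdot\ell=2$ forces $-K_X\cdot\ell_i=1$, so that $[\ell_i]$ is the primitive generator of $R_1$ and $[\ell]=2[\ell_i]$, we would obtain $E\cdot\ell_i=d_0/2=1/2\notin\bbZ$, a contradiction. Therefore every fiber of $f$ is irreducible and $f$ is a $\bbP^1$-bundle.
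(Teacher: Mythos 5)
Your opening step (using $\rho(X)=2$ and the non-nefness of $E$ to produce a second extremal contraction $g\colon X\to Y$ sending $E$ to a point) agrees with the paper, and your closing observations --- that $\deg(f|_E)=1$ makes $f|_E$ a finite birational morphism onto the smooth $\bbP^{n-1}$, hence an isomorphism, and that a reducible or non-reduced conic fiber would give $E\cdot\ell_i=1/2$ --- are fine. But there is a genuine gap exactly where you yourself flag ``the main obstacle'': the statement $d_0=1$ is never proved. Two things are missing. First, the integrality $\beta\in\bbZ$ is not justified: for a divisorial contraction $g$ onto a possibly singular $\bbQ$-factorial $Y$, the integral splitting $\pic(X)=g^*\pic(Y)\oplus\bbZ[E]$ is not automatic, since a line bundle $D$ descends only after subtracting $\frac{D\cdot C_0}{E\cdot C_0}\,E$ and there is no reason for $d$ to divide $D\cdot C_0$; note also that a curve $C_0\subset E$ with $\sO_E(1)\cdot C_0=1$ need not exist, so even your intersection table is not available as written. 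Second, and more seriously, even granting $d_0\in\{1,2\}$, the exclusion of $d_0=2$ is merely asserted (``the very rigid numerics \dots can be shown to be incompatible'') with no actual argument, and this is where essentially all the work of the lemma lies.

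For comparison, the paper writes $H=f^*\sO_{\bbP^{n-1}}(1)\equiv x\pi^*(-K_Y)-yE$, combines the top-degree relations $H^n=0$ and $(-K_X)\cdot H^{n-1}=2$ with the adjunction $K_X\sim_{\bbQ}\pi^*K_Y+\frac{r-d}{d}E$, and feeds the resulting system into the diophantine analysis of \cite[Lemma 1]{Tsukioka2006}; this reduces the bad possibility to $yd=2$ with $(n,d,l)\in\{(3,2,2),(5,4,2)\}$, and those residual cases are eliminated because they would force $r>\dim E$ and hence $E\simeq\bbP^{n-1}$ by Kobayashi--Ochiai, contradicting the same computation. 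A final intersection computation then yields $\sO_E(1)^{n-1}=1$, i.e.\ $E$ is a section. None of this numerical work appears in your proposal, so as it stands the proof is incomplete at its central step.
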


{\color{red}
	\begin{remark}
		This statement is false in general. See Lemma \ref{Section-Criterion-appendix} fo the correct statement. The mistake appears in the computation of the value of $x$, which should be $e/(2d+(r-d)e)$, not $e/(d-r)$.
	\end{remark}
}

\begin{proof}
	Denote by $r$ the index of $A$, i.e., $\sO_A(-K_A)\simeq \sO_A(r)$. As $X$ is Fano, the line bundle $\sO_{A}(-K_X)\simeq \sO_{A}(r-d)$ is ample. We get $r>d$. Since $A$ is not nef and $X$ is Fano, there exists an extremal ray $R$ of $\NE{X}$ such that $A\cdot R<0$. Let $\pi\colon X\rightarrow Y$ be the associated contraction. Then $\Exc(\pi)\subset A$ as $A\cdot R<0$. On the other hand, every curve contained in $A$ has class in $R$ since $\rho(A)=1$ and $\sO_X(A)\vert_A\simeq \sN_{A/X}$ is negative. This implies that $A=\Exc(\pi)$ and that $\pi(A)$ is a point. By adjunction, we have
	\[K_{X}\sim_{\bbQ}\pi^*K_Y+\frac{r-d}{d} A.\]
	Denote $f^*\sO_{\bbP^{n-1}}(1)$ by $H$. Since $\rho(X)=2$ and the contraction map $f$ is supposed to be elementary, there exist $x,y\in \bbQ$ such that 
	\[H\equiv x\pi^*(-K_Y)-yA.\]
	Denote by $e$ the degree $\sO_A(1)^{n-1}$. Set $\alpha=(r-d)/d$ and $m\defeq(-K_Y)^n$. Then we get
	\begin{equation}\label{Eq 1}
		0=H^n=x^n m - y^n d^{n-1} e
	\end{equation}
	and
	\begin{equation}\label{Eq 2}
		2=(-K_X)\cdot H^{n-1}=x^{n-1}m - \alpha y^{n-1}d^{n-1}e.
	\end{equation}
	Set $l\defeq 2d^2/(yd)^{n-1}$. Then the proof of \cite[Lemma 1]{Tsukioka2006} can be applied verbatim in our case to obtain that $l$ is an integer and $2d^2=(yd)^{n-1}l$. 
	
	To prove $yd=1$, as in the proof \cite[Lemma 1]{Tsukioka2006}, it suffices to exclude the case $yd=2$ and $(n,d,l)\in \{(3,2,2),(5,4,2)\}$.  Indeed, if $(n,d,l)\in \{(3,2,2),(5,4,2)\}$, then we must have $r>d=n-1=\dim(A)$. By Kobayashi-Ochiai's theorem, then $A$ is isomorphic to $\bbP^{n-1}$, which is impossible as shown in the proof of \cite[Lemma 1]{Tsukioka2006}. Thus, we have $yd=1$ and as a consequence, we have
	\[A\cdot H^{n-1}=A\cdot(x\pi^*(-K_Y)-yA)^{n-1}=(yd)^{n-1}e=e.\]
	On the other hand, by \eqref{Eq 1} and \eqref{Eq 2}, we have
	\[x=\frac{y^nd^{n-1}e}{2+\alpha y^{n-1}d^{n-1}}=\frac{ye}{2+\alpha}=\frac{e}{r+d}.\]
	It yields
	\[0=H^{n}=(x\pi^*(-K_Y)-yA)\cdot H^{n-1}=\frac{e}{r+d}\pi^*(-K_Y)\cdot H^{n-1}-\frac{e}{d}\]
	and 
	\[2=(-K_X)\cdot H^{n-1}=(\pi^*(-K_Y)-\alpha A)\cdot H^{n-1}=\pi^*(-K_Y)\cdot H^{n-1}-\frac{(r-d)e}{d}.\]
	It follows that
	\[2=\frac{r+d}{d}-\frac{(r-d)e}{d}.\]
	Hence, as $r>d$, we have $e=1$. Consequently, $X$ is isomorphic to $\bbP(f_*\sO_X(A))$ and $A$ is a section of $f\colon X\rightarrow \bbP^{n-1}$ (see \cite[Lemma 2.12]{Fujita1987})
\end{proof}

Now we are in the position to prove the first part of Theorem \ref{Classification-Negative-Divisors}. Let us recall that rational homogeneous spaces are covered by lines, see for instance \cite[V, Theorem 1.15]{Kollar1996}.

\begin{proof}[Proof of Theorem \ref{Classification-Negative-Divisors} (1)]
	Denote by $R_1$ and $R_2$ the extremal rays of $\NE{X}$ and, without loss of generality, we shall assume $A\cdot R_1>0$ (cf. \cite[Lemma 3.1]{CasagrandeDruel2015}). Then we have the following diagram:
	
	\[\xymatrix{
		&   X \ar[dl]_{\sigma}\ar[dr]^{\pi}    &  \\
		Y    &         & Z
	}\]
	where $\sigma$ (resp. $\pi$) is the extremal contraction corresponding to $R_1$ (resp. $R_2$). Since $A$ is not nef, by \cite[Remark 3.2]{CasagrandeDruel2015}, $A\cdot R_2<0$ and $\pi$ is a divisorial contraction sending $A$ to a point. Furthermore, by \cite[Lemma 3.1]{CasagrandeDruel2015}, the possibilities of $\sigma$ are as follows:
	\begin{enumerate}[label=(\alph*)]
		\item $\sigma$ is a conic bundle, finite on $A$, such that $Y$ is a Fano manifold;
		
		\item $\sigma$ is the blow-up of $Y$ along a smooth center $C$ of codimension $2$ and $Y$ is a Fano manifold.
	\end{enumerate}

    Suppose first that $\sigma$ is a conic bundle. Then the restriction $\sigma\vert_A\colon A\rightarrow Y$ is surjective. Since $A$ is a rational homogeneous space of Picard number one, according to \cite[Main Theorem]{HwangMok1999}, then either $Y\simeq \bbP^{n-1}$ or $\sigma\vert_A\colon A\rightarrow Y$ is an isomorphism. Moreover, if $Y$ is isomorphic to $\bbP^{n-1}$, by Lemma \ref{Section-Criterion}, $A$ is also a section of $\sigma$. In particular, $\sigma\colon X\rightarrow Y$ is actually a $\bbP^1$-bundle. Then \cite[Lemma 3.9]{CasagrandeDruel2015} shows that $X$ is isomorphic to $\bbP(Y,\sO_{Y}\oplus\sL^*)$, where $\sL$ is an ample line bundle over $Y$ and $A$ identifies with the section of $\sigma$ corresponding to $\sO_Y\oplus \sL^*\twoheadrightarrow \sL^*$. It follows that we have $\sL^*\simeq \sN_{A/X}\simeq \sO_{A}(-d)$ and we are in case (1.1).
    
    We assume now that $\sigma$ is a divisorial contraction. Thanks to Lemma \ref{Smoothness-Criterion}, the restriction map $\sigma\vert_A\colon A\rightarrow A_Y\defeq \sigma(A)$ is an isomorphism. On the other hand, as $\rho(Y)=1$, $A_Y$ is an ample divisor in $Y$. Then the pair $(Y,A_Y)$ is one of the possibilities listed in Theorem \ref{Watanabe-Homogeneous}. Moreover, as $A\cdot R_1>0$, $C$ is a smooth hypersurface in $A_Y$. In particular, there exists a positive integer $d'$ such that $C\in \vert\sO_{A_Y}(d')\vert$. Then a straightforward computation shows that $\sN_{A/X}$ is isomorphic to $\sN_{A_Y/Y}\otimes \sO_{A_Y}(-d')$. Thus, we get $d'=d+s$, where $\sN_{A_Y/Y}\simeq \sO_{A_Y}(s)$ and we are in case (1.2).
\end{proof}

\subsection{Case $\rho(X)=3$}

Let $X$ be a normal projective variety. We denote by $N_1(X)$ the vector space of $1$-cycles, with real coefficients, modulo numerical equivalence. For any closed subset $Z\subset X$, we denote by $N_1(Z,X)$ the subspaces of $N_1(X)$ generated by classes of curves contained in $Z$. The following result due to Casagrande and Druel provides a classification of Fano manifolds $X$ of maximal Picard number containing a prime divisor $A$ with $\dim N_1(A,X)=1$; see \cite{Fujita2012} for related results.

\begin{thm}\cite[Lemma 3.1 and Theorem 3.8]{CasagrandeDruel2015}
		\label{C-D-Classification}
	Let $X$ be a Fano manifold of dimension $n\geq 3$ and let $A\subset X$ be a prime divisor with $\dim N_1(A,X)=1$. Then $\rho(X)\leq 3$. Moreover, if $\rho(X)=3$, then $X$ is isomorphic to the blow-up of a Fano manifold $Y\simeq \bbP(\sO_Z\oplus\sO_Z(a))$ along an irreducible submanifold of dimension $(n-2)$ contained in a section of the $\bbP^1$-bundle $\pi\colon Y\rightarrow Z$, where $Z$ is a Fano manifold of dimension $(n-1)$ and $\rho(Z)=1$.
\end{thm}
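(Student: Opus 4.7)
The plan is to extract the maximum possible structural information from the hypothesis $\dim N_1(A, X) = 1$ and then to carry out an explicit Mori-theoretic analysis when $\rho(X) = 3$.

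For the bound $\rho(X) \le 3$, the key observation is that any irreducible curve $C \subset A$ has numerical class in the one-dimensional subspace $N_1(A, X) \subset N_1(X)_{\bbR}$. Since $\NE{X}$ is rational polyhedral ($X$ is Fano), at most one extremal ray $R_A$ can satisfy $A \cdot R_A < 0$: curves generating such a ray necessarily lie in $A$ and hence are all mutually numerically proportional. If $R_A$ exists, its contraction is divisorial with exceptional locus $A$, because $A$ is prime. I would then argue that among the remaining extremal rays $R$ with $A \cdot R \ge 0$, the $A$-trivial rays force their contractions to fiber transversely to $A$, while $A$-positive rays are controlled via adjunction against $-K_X$. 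Combining these restrictions with the polyhedrality of $\NE{X}$ yields $\rho(X) \le 3$, extending Tsukioka's bound obtained under the stronger hypothesis $\rho(A) = 1$ \cite{Tsukioka2006}.

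For the extremal case $\rho(X) = 3$, I would contract a chosen ray, typically $R_A$ when $A$ is not nef, or an $A$-trivial fiber-type ray when $A$ is nef (relying on Proposition \ref{Nef-Divisor}), to obtain a Fano manifold $Y$ with $\rho(Y) = 2$. The image $A_Y \defeq \sigma(A)$ is a prime divisor on $Y$ inheriting $\dim N_1(A_Y, Y) = 1$. Applying the $\rho = 2$ analysis developed in the proofs of Theorem \ref{Classification-Negative-Divisors}(1) and Proposition \ref{Nef-Divisor} to the pair $(Y, A_Y)$, one pins $Y$ down as $\bbP(\sO_Z \oplus \sO_Z(a))$ over a Fano manifold $Z$ with $\rho(Z) = 1$, with $A_Y$ realized as a section of this $\bbP^1$-bundle. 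Finally, Lemma \ref{Smoothness-Criterion} applied to the extremal divisorial contraction $\sigma$ identifies it as the blow-up of $Y$ along a smooth irreducible codimension-$2$ center; the numerical positivity of $-K_X$ tested against curves in $\Exc(\sigma)$, combined with the fact that the center must be contained in the locus where $A$ is non-ample, forces this center to lie inside the section $A_Y$.

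The main obstacle, as I see it, is not any single step but rather the cumulative bookkeeping of extremal rays in the bound $\rho(X) \le 3$: one must carefully exclude configurations in which multiple $A$-trivial extremal rays coexist, and verify that any $A$-positive extremal contraction is compatible with the very restrictive condition $\dim N_1(A, X) = 1$. Once the polyhedral structure of $\NE{X}$ is fully understood, the classification in the $\rho(X) = 3$ case follows by tracing the two-step factorization $X \to Y \to Z$ through the extremal contractions, identifying $Y \to Z$ as a scrolled $\bbP^1$-bundle via the $\rho = 2$ analysis, and recognizing $X \to Y$ as a Fano-preserving blow-up along a section-supported smooth center.
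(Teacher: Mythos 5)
First, note that the paper does not prove this statement at all: it is imported verbatim as \cite[Theorem 3.8]{CasagrandeDruel2015}, so there is no in-paper argument to compare yours against; your proposal has to stand on its own as a reconstruction of Casagrande--Druel's proof, and as such it has genuine gaps. The most serious one is the bound $\rho(X)\le 3$. Polyhedrality of $\NE{X}$ bounds nothing by itself (a Fano manifold can have arbitrarily many extremal rays), and ``$A$-trivial rays fiber transversely to $A$'' plus ``adjunction against $-K_X$'' is not an argument that produces a $3$-element spanning set of $N_1(X)$. The actual mechanism is the pushforward of $N_1(A,X)$: if $R$ is an extremal ray with $A\cdot R>0$ whose contraction $\varphi\colon X\to Y$ is of fiber type, then every nontrivial fiber meets $A$, $\varphi|_A$ is surjective, and hence $N_1(Y)=\varphi_*N_1(A,X)$ has dimension $\le 1$, forcing $\rho(X)\le 2$; consequently when $\rho(X)\ge 3$ every $A$-positive ray is divisorial of blow-up type and one descends step by step, tracking $\dim N_1(\sigma(A),Y)=1$ on the target. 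Your own closing paragraph concedes that this ``bookkeeping'' is unresolved, which means the first assertion of the theorem is not actually established.

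Two further problems affect the $\rho(X)=3$ case. (i) You propose to begin by contracting the ray $R_A$ with $A\cdot R_A<0$; but since $\dim N_1(A,X)=1$, \emph{every} curve in $A$ has class in $R_A$, so $\mathrm{cont}_{R_A}$ sends $A$ to a point and there is no prime divisor $A_Y$ left on which to ``inherit $\dim N_1(A_Y,Y)=1$'' and run a $\rho=2$ analysis. The correct order (and the one used when the present paper applies the theorem in the proof of Theorem \ref{Classification-Negative-Divisors}(2)) is to first contract an $A$-\emph{positive} divisorial ray, i.e.\ the blow-up $\sigma\colon X\to Y$, so that $A_Y=\sigma(A)$ survives as a divisor. (ii) Invoking Theorem \ref{Classification-Negative-Divisors}(1), Proposition \ref{Nef-Divisor} and Lemma \ref{Smoothness-Criterion} is both circular and too weak in generality: those results sit downstream of the Casagrande--Druel structure theory and, more importantly, assume that $A$ is a rational homogeneous space with $\pic(A)\simeq\bbZ$ and that $\sN_{A/X}\simeq\sO_A(-d)$, and they rely on the Hwang--Mok target rigidity \cite{HwangMok1999}. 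A general prime divisor with $\dim N_1(A,X)=1$ need not even satisfy $\rho(A)=1$, so none of these tools is available at the level of generality the theorem is stated in. Finally, the claim that the blow-up center lies in a \emph{section} of $\pi\colon Y\to Z$ is asserted (``the locus where $A$ is non-ample'') rather than derived; in Casagrande--Druel this comes from analyzing the third extremal ray of $X$ and its interaction with the fibers of $\pi\circ\sigma$, which your sketch does not do.
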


Though we are interested in the case where $A$ is a negative divisor, to prove the second part of Theorem \ref{Classification-Negative-Divisors}, we still need to deal with the case where $A$ is a nef divisor. In particular, we prove the following preliminary result, which may be of independent interest.

\begin{prop}\label{Nef-Divisor}
	Let $X$ be an $n$-dimensional Fano manifold with $\rho(X)=2$ and $n\geq 3$. Assume that $X$ contains a nef divisor $A$ isomorphic to a rational homogeneous space of Picard number one. Let $\sO_A(1)$ be the ample generator of $\pic(A)$. Denote by $R_1$ and $R_2$ the extremal rays of $\NE{X}$ so that $A\cdot R_1>0$, and let $\sigma$ and $\pi$ be the associated extremal contractions, respectively. Assume moreover that $\sigma$ is a $\bbP^1$-bundle and $\pi$ is not small. Then one of the following statements holds.
	\begin{enumerate}
		\item $X$ is isomorphic to $\bbP(\sO_A\oplus\sO_{A}(-d))$ $(0\leq d<r)$, where $r$ is the index of $A$, and $A$ is a section with normal bundle $\sN_{A/X}\simeq \sO_A(d)$.
		
		\item $X$ is isomorphic to the blow-up of $\bbP^{n}$ at a point $x$ (or, equivalently, $X$ is isomorphic to the $\bbP^1$-bundle $\bbP(\sO_{\bbP^{n-1}}\oplus\sO_{\bbP^{n-1}}(-1))$, and $A$ is the strict transform of a smooth quadric hypersurface in $\bbP^{n}$ not containing $x$.
	\end{enumerate}
\end{prop} 

\begin{proof}
	By our assumption and \cite[Lemma 3.1]{CasagrandeDruel2015}, we have a diagram:
	\[\xymatrix{
		&   X\ar[dl]_{\sigma}\ar[dr]^{\pi}  &   \\
		Y   &                                   & Z
	}\] 
	where $Y$ is a Fano manifold and $\sigma$ is finite over $A$. Since $\pi$ is not small, by \cite[Proposition 3.3]{CasagrandeDruel2015}, either $\pi$ is a fiber type contraction onto $Z\simeq \bbP^1$, having $A$ as a fiber, or $\pi$ is a divisorial contraction sending its exceptional divisor $E$ to a point and $E\cap A=\emptyset$. If $\pi$ is a fiber type, then $X$ is isomorphic to $A\times \bbP^1$ (see \cite[Lemma 4.9]{Casagrande2009}) and we are in case (1) with $d=0$. 
	
	Now we shall assume that $\pi$ is birational. Then $Z$ is a Fano variety with only $\bbQ$-factorial terminal singularities so that $\rho(Z)=1$. Since $\pi$ is a birational map sending $E$ to a point, by \cite[Lemma 3.9]{CasagrandeDruel2015}, there exists an ample line bundle $\sO_Y(d)$ over $Y$, where $\sO_{Y}(1)$ is the ample generator of $\pic(Y)$ and $d>0$, such that $X$ is isomorphic to $\bbP(\sO_{Y}\oplus\sO_Y(-d))$ so that the exceptional divisor $E$ of $\pi$ identifies with the section corresponding to the projection $\sO_Y\oplus\sO_Y(-d)\twoheadrightarrow\sO_Y(-d)$. On the other hand, since $A$ is a rational homogeneous space of Picard number one and the restriction $\sigma\vert_A\colon A\rightarrow Y$ is surjective, according to \cite[Main Theorem]{HwangMok1999}, then either $A$ is a section of $\sigma$ or $Y$ is isomorphic to the projective space $\bbP^{n-1}$. If $A$ is a section of $\sigma$, then $Y$ is isomorphic to $A$. In particular, $X$ is isomorphic to $\bbP(\sO_A\oplus \sO_A(-d))$. On the other hand, since $A$ is disjoint from the negative section $E$, it follows that $A$ corresponds to a quotient $\sO_A\oplus \sO_{A}(-d)\twoheadrightarrow \sO_A$. In particular, we are in case (1) with $d>0$.
	
	In the sequel we shall assume that $Y$ is isomorphic to $\bbP^{n-1}$ and $A$ is not a section of $\sigma$. Then $X$ is isomorphic to $\bbP(\sO_{\bbP^{n-1}}\oplus\sO_{\bbP^{n-1}}(-d))$. Denote by $L$ the pull-back $\pi^*\cO_{\bbP^{n-1}}(1)$. Then there exist $x,y\in \bbQ$ such that $A\sim_{\bbQ} xL+yE$ because $X$ is a Fano manifold with $\rho(X)=2$. On the other hand, note that we have
	\begin{center}
		$0=A\cdot E\cdot L^{n-2}=x-yd$\qquad and\qquad $y=A\cdot L^{n-1}\in\bbZ_{>0}$.
	\end{center}
	Thus, both $x$ and $y$ are positive integers and $x=yd$. Set $H=dL+E$. Then $A\sim yH$. Since $A$ is not a section of $\sigma$, we must have $y\geq 2$. On the other hand, as $H\vert_E\equiv 0$ and $\pi$ is an extremal contraction, by the Cone Theorem (see \cite[Theorem 3.7]{KollarMori1998}), there exists a line bundle $H_Z$ on $Z$ such that $H=\pi^*H_Z$. Then we have 
	\[A_Z\defeq \pi_*A\sim yH_Z.\]	
	Since $A$ is disjoint from $E$ and $\pi$ is an isomorphism outside $E$, $A_Z$ is contained in the smooth locus of $A$ and it is isomorphic to $A$. As $\rho(A)=1$ and $Z$ is $\bbQ$-factorial, $A_Z$ is an ample Cartier divisor on $Z$. Moreover, since $Z$ is a Fano variety, by Kawamata-Viehweg vanishing theorem, we have $h^1(Z,\sO_Z)=0$. Then, by Proposition \ref{Ampleness-Criterion}, the line bundle $\sO_{Z}(A_Z)$ is very ample. Denote by $\Phi\colon Z\rightarrow \bbP^N$ the embedding defined by $\vert \sO_{Z}(A_Z)\vert$. 
	
	As $y\geq 2$, we have $A_Z\cdot C= yH_Z\cdot C\geq 2$ for any curve $C\subset Z$. In particular, $\Phi(Z)$ is not a cone over $\Phi(A_Z)$. Moreover, note that the pair $(Z,\sO_{Z}(A_Z))$ is not isomorphic to $(Q^n,\sO_{Q^n}(1))$ as $y\geq 2$. By Proposition \ref{Quadric-Extension}, we have 
	\[h^1(A_Z,T_{A_Z}\otimes \sO_{A_Z}(-A_Z))\not=0.\]
	Therefore, by \cite[Theorem B]{MerkulovSchwachhoefer1999}, the possibilities of the pair $(A_Z,\sO_{A_Z}(A_Z))$ are as follows:
	\[(\bbP^2,\sO_{\bbP^2}(3)), (Q^{n-1},\sO_{Q^{n-1}}(2)) (n\geq 4).\]
	As $y\geq 2$ and $H_Z$ is Cartier, one can easily see that $\sO_{A_Z}(H_Z)$ is the ample generator of $\pic(A_Z)$. On the other hand, note that the case $(\bbP^2,\sO_{\bbP^2}(3))$ can not happen, because in this case we have
	\[1=\sO_{\bbP^2}(1)^2=H_Z^2\cdot A_Z=3H_Z^3,\]
	which is impossible. If $(A_Z,\sO_{A_Z}(A_Z))$ is isomorphic to $(Q^{n-1},\sO_{Q^{n-1}}(2))$, then we have 
	\[H_Z^n=\frac{1}{2} A_Z\cdot H_Z^{n-1}=\frac{1}{2}\sO_{Q^{n-1}}(1)^{n-1}=1.\]
	Furthermore, since $Z$ is a Fano manifold with $\dim(Z)=n\geq 4$ and $H_Z$ is ample, by Kawamata-Viehweg vanishing theorem, we have $h^1(Z,\sO_{Z}(H_Z))=0$. As a consequence, from the following exact sequence
	\[0\rightarrow \sO_Z(-H_Z)\rightarrow \sO_Z(H_Z)\rightarrow \sO_{A_Z}(H_Z)\rightarrow 0,\] 
	we obtain
	\[h^0(Z,\sO_Z(H_Z))=h^0(A_Z,\sO_{A_Z}(H_Z))=h^0(Q^{n-1},\sO_{Q^{n-1}}(1))=n+1.\]
	Then, according to \cite[Chapter I, Theorem 1.1]{Fujita1990}, the pair $(Z,\sO_Z(H_Z))$ is isomorphic to $(\bbP^n,\sO_{\bbP^n}(1))$. We claim that $d=1$ in this case. Indeed, note that $\sO_X(E)$ is the tautological bundle $\sO_{\bbP(\sE)}(1)$, where $\sE\simeq \sO_{\bbP^{n-1}}\oplus \sO_{\bbP^{n-1}}(-d)$, so we have 
	\begin{equation}\label{KX1}
		K_X\sim -(n+d)L-2E.
	\end{equation}
	On the other hand, note that $\sN_{E/X}\simeq \sO_{\bbP^{n-1}}(-d)$, by the adjunction formula, we have
	\begin{equation}\label{KX2}
		K_X\sim_{\bbQ} -(n+1)\pi^*H_Z+\frac{n-d}{d} E=-(n+1)(dL+E)+\frac{n-d}{d}E.
	\end{equation}
	Combining \eqref{KX1} and \eqref{KX2} yields
	\[(-n-d+d(n+1))L\sim_{\bbQ} \left(-(n-1)+\frac{n-d}{d}\right)E.\]
	This is possible if and only if $d=1$ because $L$ is not numerically proportional to $E$. Hence, $X$ is isomorphic to $\bbP(\sO_{\bbP^{n-1}}\oplus\sO_{\bbP^{n-1}}(-1))$ and we are in case (2).
\end{proof}

Now we are ready to prove the second part of Theorem \ref{Classification-Negative-Divisors}. It can be regarded as a refinement of Theorem \ref{C-D-Classification}.

\begin{proof}[Proof of Theorem \ref{Classification-Negative-Divisors} (2)]
	By the proof of \cite[Theorem 3.8]{CasagrandeDruel2015}, there exists a blow-up $\sigma\colon X\rightarrow Y$ along a smooth center $C$ of codimension $2$, $Y$ is smooth and Fano, and $A\cdot R>0$, where $R$ is the extremal ray of $\NE{X}$ generated by the class of a curve contracted by $\sigma$. Moreover, there exists a Fano manifold $Z$ of dimension $n-1$, $\rho(Z)=1$ and a $\bbP^1$-bundle $\pi\colon Y\rightarrow Z$. Set $A_Y\defeq \sigma(A)$. Thanks to Lemma \ref{Smoothness-Criterion}, the restriction $\sigma\vert_{A}\colon A\rightarrow A_Y$ is an isomorphism. Note that $C$ is contained in $A_Y$ and we will denote by $d'$ the positive integer such that $C\in \vert \sO_{A_Y}(d')\vert$.
	
	First suppose that $A_Y$ is not nef in $Y$. Then the pair $(Y,A_Y)$ is isomorphic to one of the varieties listed in Theorem \ref{Classification-Negative-Divisors} (1.1); that is, $Y\simeq \bbP(\sO_Z\oplus\sO_Z(s))$ $(-r<s<0)$ and $A_Y$ is a section of $\pi$ with normal bundle $\sN_{A_Y/Y}\simeq \sO_Z(s)$. Moreover, the normal bundle $\sN_{A/X}\simeq\sO_{A}(-d)$ is isomorphic to $\sO_{Z}(s-d')$. Thus it follows that $d'=s+d$ and $s>-d$. Thus, we are done in this case.
	
	We assume now that $A_Y$ is nef in $Y$. By \cite[Proposition 3.3]{CasagrandeDruel2015}, $Y$ does not admit small contractions. Therefore the pair $(Y,A_Y)$ is isomorphic to one of the varieties listed in Proposition \ref{Nef-Divisor}. 
	
	We claim that case (2) of Proposition \ref{Nef-Divisor} cannot happen. Otherwise, $Y$ is isomorphic to the blow-up of $\bbP^n$ at a point $x$. Denote by $\mu\colon Y\rightarrow \bbP^n$ the blow-up. Then $C$ is contained in a section $G_Y$ of $\pi\colon Y\rightarrow Z$. We note that $G_Y$ is a the strict transform of a hyperplane $H$ passing through $x$ under $\mu$. In particular, $C$ is contained in $A_Y\cap G_Y$ and consequently $C$ is a hyperplane section of $A_Y\simeq Q^{n-1}$. Then a straightforward computation shows that $\sN_{A/X}$ is isomorphic to $\sO_{Q^{n-1}}(1)$, a contradiction.
	
	Finally suppose that we are in case (1) of Proposition \ref{Nef-Divisor}; that is, the pair $(Y,\sO_Y(A_Y))$ is isomorphic to $\bbP(\sO_Z\oplus\sO_Z(s))$ $(0\leq s<r)$ and $A_Y$ is a section with normal bundle $\sN_{A_Y/Y}\simeq \sO_{Z}(s)$. Then the normal bundle $\sN_{A/X}$ is isomorphic to $\sO_Z(s-d')$. Thus we have $d'=s+d$ and $-d<s$.
\end{proof}

\appendix

\section{Corrigendum}

\label{corrigendum}

The purpose of this note is to make a correction to \cite{Liu2020a}. In \cite[Theorem 1.2]{Liu2020a}, we give a classification of pairs $(X,A)$ such that $X$ is a Fano manifold of dimension $n\geq 3$ and $A$ is a smooth ample divisor which is isomorphic to some rational homogeneous space of Picard number $1$ and the conormal bundle $\sN_{A/X}^*$ is ample. However, it turns out that there exists one case missed in the statement of the theorem and \cite[Theorem 1.2]{Liu2020a} should be read as follows.

\begin{thm}\label{Classification-Negative-Divisors-appendix}
	Let $X$ be a Fano manifold of dimension $n\geq 3$ containing a divisor $A$ isomorphic to a rational homogeneous space with Picard number one. Denote by $\sO_A(1)$ the ample generator of $\pic(A)$ and by $r$ the index of $A$. Assume that $\sN_{A/X}$ is isomorphic to $\sO_A(-d)$ for some integer $d>0$. Then $0<d<r$ and we are in one of the following cases.
	\begin{enumerate}
		\item  $\rho(X)=2$ and the pair $(X,A)$ is isomorphic to one of the following:
		\begin{enumerate}
			\item $X$ is isomorphic to $\bbP(\sO_A\oplus \sO_A(-d))$ and $A$ is a section with normal bundle $\sN_{A/X}\simeq \sO_A(-d)$;
			
			\item $X$ is obtained by blowing up one of the pairs $(X',A')$ listed \cite[Theorem 1]{Watanabe2008} along a smooth center $C\in \vert \sO_{A'}(d+s)\vert$, where $\sO_{A'}(1)$ is the ample generator of $\pic(A')$, $\sN_{A'/X'}\simeq \sO_{A'}(s)$ and $A$ is the strict transform of $A'$.
			
			{\color{red}\item $X$ is a smooth element in $|\sO_{\bbP^{n-1}}(\sE)(2)\otimes \pi^*\sO_{\bbP^{n-1}}(2)|$ and $A$ is isomorphic to a quadric hypersurface such that $X\cap F=A$, where $\sE$ is the vector bundle $\sO_{\bbP^{n-1}}(d)\oplus \sO_{\bbP^{n-1}}\oplus \sO_{\bbP^{n-1}}(-1)$, the map $\pi:\bbP(E)\rightarrow \bbP^{n-1}$ is the natural projection and the variety $F\subset \bbP(E)$ is the subbundle corresponding to the quotient $\sE\rightarrow \sO_{\bbP^{n-1}}\oplus \sO_{\bbP^{n-1}}(-1)$.}
		\end{enumerate}
		
		\item $\rho(X)=3$ and $X$ is obtained by blowing up a Fano manifold $Y$ along a smooth center $C\in \vert \sO_{A_Y}(d+s)\vert$ such that $-d<s<r$, where $Y$ is isomorphic to $\bbP(\sO_A\oplus\sO_A(s))$, $A_Y$ is a section with normal bundle $\sN_{A_Y/Y}\simeq \sO_{A}(s)$, $\sO_{A_Y}(1)$ is the ample generator of $\pic(A_Y)$ and $A$ is the strict transform of $A_Y$.
	\end{enumerate}	
\end{thm}

The mistake appears in the proof of \cite[Lemma 3.2]{Liu2020a} and the statement of \cite[Lemma 3.2]{Liu2020a} is false in general. Indeed, in the proof of \cite[Lemma 3.2]{Liu2020a}, the value of $x$ should be
\[
x=\frac{ye}{2+\alpha e} = \frac{e}{2d+(r-d)e},
\]
while in the published paper $"e"$ in the denominator disappeared. In particular, the last equation in the same page should be as $2=2$ which is trivial. We correct \cite[Lemma 3.2]{Liu2020a} in Lemma \ref{Section-Criterion-appendix} by proving a weaker statement; that is, the number $2d/e$ is an integer. In particular, for $A$ being a rational homogeneous space of Picard number $1$, Lemma \ref{Section-Criterion-appendix} can be applied to show that $A$ is actually a section of the conic bundle $f:X\rightarrow \bbP^{n-1}$ unless it is isomorphic to a quadric hypersurface or the $10$-dimensional spinor variety $\bbS_{5}$. Then by a detailed analysis of the conic bundle structure $f$, we exclude the spinor variety $\bbS_{5}$ case by an ad-hoc argument.

Here is the organisation of this short note. In Section \ref{s.examples} we give an explicit construction of examples for the new case (1.3) of Theorem \ref{Classification-Negative-Divisors-appendix}. In Section \ref{s.correctionoflemma3.2} we prove a weaker statement of \cite[Lemma 3.2]{Liu2020a} to show that $2d/e$ is an integer and then applying it to show that in \cite[Lemma 3.2]{Liu2020a} if $A$ is assumed to be a rational homogeneous space of Picard number $1$, then $A$ is a section of $f$ unless $A$ is isomorphic to a quadric hypersurface. Finally we finish the proof of Theorem \ref{Classification-Negative-Divisors-appendix} by pointing out the parts affected by \cite[Lemma 3.2]{Liu2020a}.

\subsection{Examples}

\label{s.examples}

In this subsection, we construct some examples for case (1.3) of Theorem \ref{Classification-Negative-Divisors-appendix}. We start from the following example (see \cite[Proposition 3.4 (2)]{Liu2020a}). Let $\sF\rightarrow \bbP^n$ be the vector bundle $\sO_{\bbP^n}\oplus \sO_{\bbP^n}(-1)$ with $n\geq 3$. Then $F=\bbP(\sF)$ is isomorphic to the blowing-up of $\bbP^{n+1}$ at a point. Denote by $\mu\colon F\rightarrow \bbP^n$ the blowing-up and let $D$ be the exceptional divisor. Denote by $\zeta_F$ the tautological divisor of $\bbP(\sF)$ and by $\pi_F:F\rightarrow \bbP^n$ the natural projection. Let $H_F$ be a Weil divisor associated to the pull-back $\pi^*_F\sO_{\bbP^n}(1)$. Then we have
\[
\mu^*\sO_{\bbP^n}(1)\cong \sO_{\bbP(\sF)}(\zeta_F+H_F) \quad {\rm and}\quad D=\zeta_F.
\]
Let $A\subset F$ be a general smooth member in $|2\zeta_F+2H_F|$ such that $A$ is disjoint from $D$. Then $A$ is isomorphic to an $n$-dimensional quadric hypersurface. Consider the the vector bundle $\sE\rightarrow \bbP^n$ which is isomorphic to $\sO_{\bbP^n}(d)\oplus \sF$ with $1\leq d\leq n-1$. Then $F\subset \bbP(E)$ is a smooth prime divisor. Denote by $\zeta$ the tautological divisor of $\pi:\bbP(\sE)\rightarrow \bbP^n$ and by $H$ a Weil divisor associated to the pull-back $\pi^*\sO_{\bbP^n}(1)$. Then we have
\[
F\in  |\zeta - dH|.
\]
Recall that the restriction $\zeta|_{F}$ is isomorphic to $\zeta_F$ and $H|_{F}=H_F$. Consider the following short exact sequence
\[
0\rightarrow \sO_{\bbP(\sE)}(\zeta+(d+2)H) \rightarrow \sO_{\bbP(\sE)}(2\zeta+2H) \rightarrow \sO_{F}(2\zeta_F+2H_F)\rightarrow 0,
\]
As $K_{\bbP(\sE)}=-3\zeta+(d-2-n)H$, we have 
\[
\zeta + (d+2)H = K_{\bbP(\sE)} + 4\zeta + (n+4)H.
\]
As $d\geq 1$ and $n\geq 3$, $4\zeta+(n+4)H$ is ample. By Kodaira's vanishing theorem, we have $H^1(\bbP(\sE),\sO_{\bbP(\sE)}(\zeta+(d+2)H)=0$. In particular, the induced morphism 
\[
H^0(\bbP(\sE),\sO_{\bbP(\sE)}(2\zeta+2H)) \rightarrow H^0(F,\sO_{F}(2\zeta_F+2H_F))
\]
is surjective and there exists a divisor $X\in |2\zeta+2H|$ such that $X\cap F=A$. Moreover, as $A$ is general and $2\zeta+2H$ is globally generated, we may assume that $X$ is again smooth. Note that we have
\[
\sO_X(A)=\sO_{\bbP(\sE)}(F)|_X\cong \sO_{X}(\zeta - dH).
\]
On the other hand, as $\zeta|_A=\zeta_F|_A\sim D|_A=0$, we get
\[
\sO_X(A)|_A \cong \sO_A(-d).
\]
Now we claim that $X$ is a Fano manifold. By adjunction formula, we have
\[
K_X=(K_{\bbP(\sE)}+ 2\zeta+2H)|_X=\sO_X(-\zeta+(d-n)H).
\]
If $d\leq n-1$, then $\zeta+(n-d)H$ is a semi-ample big and nef divisor with non-ample locus contained in $D$. By our construction, the variety $X$ is disjoint from $D$, thus the restriction $(\zeta+(n-d)H)|_X$ is ample and hence $-K_X$ is ample.

\subsection{Correction of \cite[Lemma 3.2]{Liu2020a}}
\label{s.correctionoflemma3.2}

As pointed out in the beginning, \cite[Lemma 3.2]{Liu2020a} is not correct in general. We replace it by the following weaker statement.

\begin{lemma}
	\label{Section-Criterion-appendix}
	Let $X$ be a Fano manifold of dimension $n\geq 3$ and with $\rho(X)=2$, and let $A$ be a smooth Fano hypersurface of $X$ such that $\pic(A)\simeq \bbZ\sO_A(1)$ for some ample line bundle $\sO_A(1)$ and $\sN_{A/X}\simeq\sO_A(-d)$ for some $d>0$. Assume furthermore that there exists a curve of degree $1$ on $A$; i.e. an irreducible curve $C\subset A$ such that $c_1(\sO_A(1))\cdot C=1$. If $X$ admits an extremal contraction $f\colon X\rightarrow \bbP^{n-1}$, which is a conic bundle, such that $f$ is finite over $A$. {\color{red} Then $f^*\sO_{\bbP^{n-1}}(1)\cong \sO_A(1)$ and $2d/e$ is an integer, where $e=\deg_A(\sO_A(1))$.} 
\end{lemma}
\begin{proof}
	Denote by $r$ the index of $A$, i.e., $\sO_A(-K_A)\simeq \sO_A(r)$. As $X$ is Fano, the line bundle $\sO_{A}(-K_X)\simeq \sO_{A}(r-d)$ is ample. We get $r>d$. Since $A$ is not nef and $X$ is Fano, there exists an extremal ray $R$ of $\NE{X}$ such that $A\cdot R<0$. Let $\pi\colon X\rightarrow Y$ be the associated contraction. Then $\Exc(\pi)\subset A$ as $A\cdot R<0$. On the other hand, every curve contained in $A$ has class in $R$ since $\rho(A)=1$ and $\sO_X(A)\vert_A\simeq \sN_{A/X}$ is negative. This implies that $A=\Exc(\pi)$ and that $\pi(A)$ is a point. By adjunction, we have
	\[K_{X}\sim_{\bbQ}\pi^*K_Y+\frac{r-d}{d} A.\]
	Let $H$ be a Weil divisor associated to $f^*\sO_{\bbP^{n-1}}(1)$. Since $\rho(X)=2$ and the contraction map $f$ is supposed to be elementary, there exist $x,y\in \bbQ$ such that 
	\[H\equiv x\pi^*(-K_Y)-yA.\]
	Denote by $e$ the degree $\sO_A(1)^{n-1}$. Set $\alpha=(r-d)/d$ and $m\defeq(-K_Y)^n$. Then we get
	\begin{equation}\label{Eq 1-appendix}
		0=H^n=x^n m - y^n d^{n-1} e
	\end{equation}
	and
	\begin{equation}\label{Eq 2-appendix}
		2=(-K_X)\cdot H^{n-1}=x^{n-1}m - \alpha y^{n-1}d^{n-1}e.
	\end{equation}
	Set $l\defeq 2d^2/(yd)^{n-1}$. Now we follow the argument of \cite[Lemma 1]{Tsukioka2006} to show that $yd=1$. By \eqref{Eq 1-appendix}, we have
	\[
	\left(\frac{y}{x}\right)^n=\frac{m}{d^{n-1}e}.
	\]
	Combining \eqref{Eq 1-appendix} and \eqref{Eq 2-appendix} yields
	\[
	x=\frac{y^n d^{n-1} e}{2+\alpha y^{n-1} d^{n-1} e}.
	\]
	This implies
	\[
	\frac{y}{x}=y\cdot \frac{2 + \alpha y^{n-1} d^{n-1} e}{y^n d^{n-1} e} = \frac{2 + \alpha y^{n-1} d^{n-1} e}{y^{n-1} d^{n-1} e}.
	\]
	It follows
	\[
	\left(\frac{2d^2}{y^{n-1} d^{n-1}} + \alpha d^2 e\right)^n = \frac{m}{d^{n-1} e} \cdot (d^2 e)^n = md^n e^{n-1} \cdot d.
	\]
	As $md^n$ is an integer, it follows that
	\[
	\frac{2d^2}{y^{n-1} d^{n-1}} + \alpha d^2 e = \frac{2d^2}{y^{n-1} d^{n-1}} + (r-d)de
	\]
	is an integer. In particular, $l$ is an integer. As $d<n$, we obtain
	\[
	2(n-1)^2 \geq 2d^2 = (yd)^{n-1} \cdot l.
	\]
	As $H\cdot C = -y A\cdot C=-yc_1(\sO_A(A))\cdot C= yd$ is an integer and $n\geq 3$, we must have $yd\leq 2$. Moreover, if $yd=2$, we have $2d^2=2^{n-1}\cdot l$, hence $d^2=2^{n-2}\cdot l$. On the other hand, as $(l+(r-d)de)^n=md^n e^{n-1} d\in d\mathbb{N}$, we have $l^n\in d\bbN$. In particular, as $d\leq n-1$, we obtain $(n,d,l)\in \{(3,2,2),(5,4,2)\}$. If $(n,d,l)\in \{(3,2,2),(5,4,2)\}$, then we must have $r>d=n-1=\dim(A)$. By Kobayashi-Ochiai's theorem, then $A$ is isomorphic to $\bbP^{n-1}$, which is impossible by \cite[Lemma 1]{Tsukioka2006}. Thus, we have $yd=1$ and as a consequence, we have
	\[A\cdot H^{n-1}=A\cdot(x\pi^*(-K_Y)-yA)^{n-1}=(yd)^{n-1}e=e.\]
	In particular, we get $H|_A\cong \sO_A(1)$. {\color{red}As a consequence, we obtain
		\[
		x=\frac{ye}{2+\alpha e}=\frac{e}{2d+(r-d)e}\quad {\rm and}\quad \frac{y}{x}=\frac{2+\alpha e}{e}=\frac{2d+(r-d)e}{ed}.
		\]
		It yields
		\[
		K_X=-\frac{1}{x}H-\frac{y}{x}A+\frac{r-d}{d}A=-\frac{2d+(r-d)e}{e}H - \frac{2}{e}A.
		\]
		This implies
		\begin{align*}
			K_X^2\cdot H^{n-2} & = 2\cdot \left[\frac{2d+(r-d)e}{e}\right]\cdot \left(\frac{2}{e}\right)A\cdot H^{n-1} + \left(\frac{2}{e}\right)^2 A^2\cdot H^{n-2} \\
			& = 4 \left[\frac{2d+(r-d)e}{e}\right] - \frac{4d}{e}\\
			& = \frac{4d}{e} + 4(r-d).
		\end{align*}
		As $K_X^2\cdot H^{n-2}$ is an integer, it follows that $4d/e$ is an integer. On the other hand, set $\beta=\left[2d+(r-d)e\right]/e$, we also have
		\begin{align*}
			(-K_X)^3\cdot H^{n-3} & = 3\beta^2\cdot \frac{2}{e} AH^{n-1} + 3\beta\cdot \left(\frac{2}{e}\right)^2 A^2\cdot H^{n-2} + \left(\frac{2}{e}\right)^3 A^3\cdot H^{n-3}\\
			& = 6\beta^2 - \frac{12\beta d}{e} +\frac{8d^2}{e^2} \\
			& = 6 \left[\frac{2d+(r-d)e}{e}\right]^2 - \frac{12 d}{e} \cdot \left[\frac{2d+(r-d)e}{e}\right] + \frac{8 d^2}{e^2}   \\
			& = \frac{24 d^2}{e^2} + \frac{24d(r-d)}{e} + 6(r-d)^2 - \frac{24 d^2}{e^2} - \frac{12 d(r-d)}{e} + \frac{8 d^2}{e^2}\\
			& = \frac{12 d(r-d)}{e} + 6(r-d)^2 + \frac{8 d^2}{e^2}.
		\end{align*}
		As $(-K_X)^3\cdot H^{n-2}$ and $4d/e$ are integers, it follows that $8d^2/e^2$ is an integer. This implies that $2d/e$ is an integer. In particular, we have $e\leq 2d\leq 2r-2$.}
\end{proof}

The remaining part of this section is devoted to prove the following result, which will be used to finish the proof of Theorem \ref{Classification-Negative-Divisors-appendix}.

\begin{prop}
	\label{p.rhs-conic}
	In Lemma \ref{Section-Criterion-appendix}, if we assume in addition that $A$ is isomorphic to a rational homogeneous space of Picard number $1$, then $A$ is a section of $f$ unless $A$ is isomorphic to a quadric hypersurface.
\end{prop}

The proof of Proposition \ref{p.rhs-conic} above will be divided into two different parts. In the first part, we show that a rational homogeneous space $A$ of Picard number $1$ satisfies $e\leq 2r-2$ if and only if it is isomorphic to one of the following: a projective space, a quadric hypersurface, the Grassmann variety $\Gr(2,5)$ and the $10$-dimensional spinor variety $\bbS_{5}$. The projective space cases are proved in \cite{Tsukioka2006} and the Grassmann variety $\Gr(2,5)$ can be easily excluded by the fact that $2d/e$ is an integer for some $d\leq r-1$. In the second part, we exclude the spinor variety $\bbS_{5}$ case by studying the conic bundle structure $f$ carefully.

\subsubsection{Rational homogeneous space of small degrees}

Now we proceed to classify rational homogeneous spaces of Picard number $1$ satisfying $e\leq 2r-2$. 

\begin{prop}
	\label{Prop:eleq2d-2}
	Let $A$ be an $n$-dimensional rational homogeneous space of Picard number $1$ with degree $e$ and index $r$. Then $e\leq 2r-2$ if and only if $A$ is isomorphic to one of the following varieties:
	\begin{enumerate}
		\item a projective space $\bbP^n$ with $e=1$ and $r=n+1$;
		
		\item a quadric hypersurface $\bbQ^n$ $(n\geq 3)$ with $e=2$ and $r=n$;
		
		\item the Grassmann variety $\Gr(2,5)$ with $e=5$ and $r=5$;
		
		\item the $10$-dimensional spinor variety $\bbS_{5}$ with $e=12$ and $r=8$.
	\end{enumerate}
\end{prop}

\begin{thm}
	\cite{Ionescu2008}
	\label{Thm:eleqN}
	Let $Z\subsetneq \bbP^N$ be an $n$-dimensional irreducible, smooth, non-degenerate and linearly normal projective variety of degree $e$. Assume that $Z$ is a Fano manifold of Picard number $1$ such that $2N\geq 3n$ and $n\geq 2$. If $e\leq N$, then $Z$ has index at least $n-2$.
\end{thm}

\begin{proof}
	Denote by $c=N-n$ the codimension of $Z$. Then we have $n\leq 2c$ by assumption. Firstly we assume that $n\leq c+1$. Then we have 
	\[
	e\leq N \leq N+c+1-n=2c+1.
	\]
	As $Z$ has Picard number $1$, it follows from \cite[Theorem I]{Ionescu1985} that $Z$ has index at least $n-1$. 
	
	Secondly we assume that $c+2\leq n\leq 2c$. Let $\Delta$ be the $\Delta$-genus $e-c-1$ of $Z$. If $\Delta\leq 1$,  it is well known from the classification of Fano manifolds that $X$ is has index $\geq n-1$ (see for instance \cite[Theorem A and B]{Ionescu2008}). Thus we may assume that $\Delta\geq 2$. Then it follows from \cite[Propoistion 10]{Ionescu2008} that $Z$ has index $n-2$.
\end{proof} 

\begin{lemma}
	\label{Lemma:rgeqn-2}
	Let $A$ be an $n$-dimensional rational homogeneous space of Picard number $1$. If $r\geq n-2$, then $A$ is isomorphic to one of the following
	\begin{center}
		$\bbP^n$,\, $Q^n$ $(n\geq 3)$,\, $\Gr(2,5)$,\, $\bbS_{5}$,\, $\Gr(2,6)$,\, $LG(3,6)$,\, $G_2/P_2$.
	\end{center}
	In particular, $e\leq 2r-2$ if and only $A$ is isomorphic to one of the varieties listed in Proposition \ref{Prop:eleq2d-2}.
\end{lemma}

\begin{proof}
	This is well-known from the classification of Fano manifold of index at least $ n-2$, see \cite[Theorem 3.1.14, Table 12.1 and Theorem 5.2.3]{IskovskikhProkhorov1999}. In particular, the corresponding pairs $(e,r)$ are as follows
	\begin{center}
		$(1,n+1)$,\, $(2,n)$,\, $(5,5)$,\, $(12,8)$,\, $(14,6)$,\, $(16,4)$\, $(18,3)$.
	\end{center} 
	This finishes the proof.
\end{proof}

\begin{lemma}
	\label{Lemma:r>h+1}
	Let $X=\mathcal{D}_l/P_k$ be a rational homogeneous space of Picard number $1$, with index $r$. Let $L$ be the ample generator of $\pic(X)$. Then $2r> h^0(X,L)+1$ if and only if $X$ is isomorphic to either $\bbP^n$ or $\bbQ^n$ $(n\geq 4)$.
\end{lemma}

\begin{proof}
	We refer the reader to \cite[Table 1]{Konno1986} for the explicit values of $r$ and $h^0(X,L)$ in terms of $l$ and $k$. We just remark that in \cite[Table 1]{Konno1986}, the index of $X$ is denoted by $k$ and the node is denote by $r$. Moreover, we also recall that $G_2/P_1$ is isomorphic to the $5$-dimensional quadric hypersurface $\bbQ^5$. In particular, if $X$ is of E-F-G type, it can be easily shown that $2r>h+1$ if and only if $X$ is isomorphic to $G_2/P_1$, where $h=h^0(X,L)$. Now we prove it for $X$ being of classical type. In the following table, we collect the values of $r$ and $h$ for $X$ of classical types. Here we remark that $B_l/P_l$ is isomorphic to $D_l/P_{l-1}$ and it is also isomorphic to $D_l/P_l$ which is called the spinor variety $\bbS_l$, and $C_l/P_1$ is isomorphic to $A_{2l-1}/P_1$ which is the projective space $\bbP^{2l-1}$.
	
	\renewcommand*{\arraystretch}{1.6}
	\begin{longtable}{|M{1cm}|M{3cm}|M{3cm}|M{3cm}|}
		\hline	
		
		$\mathcal{D}_l$            &    node $k$                &
		$r$                        &    $h$                
		\\
		\hline
		
		$A_l$                      &   $1\leq k\leq l$           &
		$l+1$                      &   $\binom{l+1}{k}$                
		\\
		\hline
		
		$B_l$                      &   $1\leq k\leq l-1$         &
		$2l-k$                     &   $\binom{2l+1}{k}$                
		\\
		\hline
		
		$C_l$                  &   $2\leq k\leq l$         &
		$2l+1-k$               &   $\binom{2l}{k}-\binom{2l}{k-2}$                
		\\
		\hline
		
		$D_l$                  &   $1\leq k\leq l-2$         &
		$2l-1-k$               &   $\binom{2l}{k}$                
		\\
		\hline
		
		$D_l$                  &   $l-1$                     &
		$2l-2$                 &   $2^{l-1}$                
		\\
		\hline
	\end{longtable}
	
	\begin{enumerate}
		\item $\mathcal{D}_l=A_l$. Firstly we note that $A_l/P_k$ is isomorphic to $A_l/P_{l-k+1}$. Thus we may assume that $2k\leq l+1$. Moreover, $A_l/P_1$ is isomorphic to the projective space $\text{Gr}(1,l+1)=\bbP^{l}$ and $A_3/P_2$ is isomorphic to $\Gr(2,4)$ which is the $4$-dimensional quadric hypersurface. For $2\leq k\leq \frac{l+1}{2}$, by our assumption, we have
		\[
		2r=2(l+1)>h=\binom{l+1}{k}\geq \binom{l+1}{2}=\frac{l(l+1)}{2}.
		\]
		This implies that $l=3$ and $k=2$; that is, $X$ is isomorphic to $\bbQ^4$.
		
		\item $\mathcal{D}_l=B_l$. Firstly we note that $B_l/P_1$ is isomorphic to the $(2l-1)$-dimensional quadric hypersurface $\bbQ^{2l-1}$. If $k\geq 2$, by our assumption, we have
		\[
		4l-4\geq 4l-2k=2r > h = \binom{2l+1}{k}\geq \binom{2l+1}{2}=l(2l+1)\geq 4l+2,
		\]
		which is obviously impossible.
		
		\item $\mathcal{D}_l=C_l$. Firstly we note that $C_2/P_2$ is isomorphic to the $3$-dimensional quadric hypersurface. By our assumption, we have $2r\geq h+2\geq \dim(X) + 3$ since $L$ is very ample. Recall that the dimension of $X$ is as follows:
		\[
		\dim(X)=2k(l-k)+\frac{k(k+1)}{2}.
		\]
		Thus, if $2\leq k\leq l-1$, then we have
		\begin{align*}
			2r=4l+2-2k & \geq 2k(l-k)+\frac{k(k+1)}{2} + 3\\
			& \geq (2k-4)l + 4l-2k^2+\frac{k(k+1)}{2} + 3\\
			& \geq (2k-4)(k+1) + 4l -2k^2 +\frac{k(k+1)}{2} +3\\
			& \geq 4l-1+\frac{k(k-3)}{2}
		\end{align*}
		which is possible only if $k=2$. Nevertheless, if $k=2$, then we have $2r=4l-2$ and $h+1=l(2l-1)$, which is impossible as $l\geq k+1=3$. Thus we may assume that $l=k$. Then we obtain
		\[
		2r=2l+2 > h+1\geq \dim(X)+2 = \frac{l(l+1)}{2}+2,
		\]
		which is impossible unless $l=2$. On the other hand, note that $C_2/P_2$ is isomorphic to the $3$-dimensional quadric hypersurface, which is again impossible.
		
		\item $\mathcal{D}_l=D_l$ and $1\leq k\leq l-2$. Firstly we note that $D_l/P_1$ is the $(2l-2)$-dimensional quadric hypersurface. For $k\geq 2$, by our assumption, we have
		\begin{align*}
			4l-6\geq 2r=2(2l-1-k) & > h=\binom{2l}{k}\geq \binom{2l}{2}=l(2l-1),
		\end{align*}
		which is impossible as $l\geq k+2\geq 4$.
		
		\item $\mathcal{D}_l=D_l$ and $k=l-1$. If $2\leq l\leq 4$, the variety $X$ is isomorphic to $\bbP^{1}$ $(l=2)$, $\bbP^3$ $(l=3)$ and the $6$-dimensional quadric hypersurface $\bbQ^6$ $(l=4)$. Thus, we may assume that $l\geq 5$. Then by our assumption, we obtain
		\[
		2r=2(2l-2)> h=2^{l-1}=4\cdot 2^{l-3}\geq 4\cdot 2(l-3),
		\]
		which is impossible.
	\end{enumerate}
	This finishes the proof.
\end{proof}

Now we are in the position to prove Proposition \ref{Prop:eleq2d-2}.

\begin{proof}[Proof of Proposition \ref{Prop:eleq2d-2}]
	Let $L$ be the ample generator of $\pic(A)$. Then $L$ is very ample. Denote $h^0(X,L)$ by $h$.
	
	If $2r>h+1$, by Lemma \ref{Lemma:r>h+1}, $A$ is isomorphic to either a projective space or a quadric hypersurface.
	
	If $2r\leq h+1$, then we get $e\leq 2r-2\leq h-1$ and therefore Theorem \ref{Thm:eleqN} implies that either $A$ has index $\geq n-2$, or $2(h-1)<3n$. In the former case, we can conclude by lemma \ref{Lemma:rgeqn-2}. In the latter case, we note that $A$ is quadric; that is, the embedding $A\subset \bbP(H^0(A,L))$ is scheme-theoretically cut out by quadric hypersurfaces. Then $A$ is actually a complete intersection in $\bbP(H^0(A,L))$ (cf. \cite{IonescuRusso2013}). Hence, $A$ is actually a quadric hypersurface.
\end{proof}

\subsubsection{Fano conic bundles}

Let $f: X\rightarrow \bbP^{n-1}$ be an $n$-dimensional Fano conic bundle with $n\geq 3$, i.e., $X$ is a Fano manifold and $f$ is a conic bundle structure. Denote by $\sE$ the locally free sheaf $f_*\sO_X(-K_X)$ of rank $3$. Let $\zeta$ be the tautological divisor of $\bbP(\sE)$. Denote by $c$ the integer such that $\det(\sE)\cong \sO_{\bbP^{n-1}}(c)$. Let $H$ be a Weil divisor associated to $\pi^*\sO_{\bbP^{n-1}}(1)$, where $\pi:\bbP(\sE)\rightarrow \bbP^{n-1}$ is the natural projection. Then $X$ can be embedded in $\bbP(\sE)$ as a divisor such that
\[
X\in |2\zeta + (n-c)H|.
\]
Let $A\subset X$ be an irreducible smooth divisor which is a Fano manifold of Picard number $1$ such that $H|_{A}$ is the ample generator of $\pic(A)$ and $\sO_X(A)|_A\cong \sO_A(-dH)$ for some $d>0$. Denote by $e$ the degree of $A$ with respect to $H|_A$ and by $h:A\rightarrow \bbP^{n-1}$ the induced finite morphism. Let $r$ be the index of $A$.

\begin{lemma}
	\label{Lemma:Spliting-type-I}
	Let $\sE\rightarrow \sO_{\bbP^{n-1}}(a)$ be a non-zero morphism of coherent sheaves. If $a\leq 0$, then there exists an integer $b\leq a$ such that $2b=c-n$ and
	\[
	\sE \cong \sO_{\bbP^{n-1}}(c-b-r+d) \oplus \sO_{\bbP^{n-1}}(r-d) \oplus \sO_{\bbP^{n-1}}(b).
	\]
\end{lemma}

\begin{proof}
	Let $\sQ\subset \sO_{\bbP^{n-1}}(a)$ be the image of $\sE$ and denote by $\sL\cong \sO_{\bbP^{n-1}}(b)$ the reflexive hull of $\sQ$. Then we have $b\leq a\leq 0$. In particular, the generically surjective morphism $\sE\rightarrow \sL$ defines a rational section $S\subset \bbP(\sE)$ such that there exists a Zariski open subset $U\subset \bbP^{n-1}$ satisfying
	\begin{enumerate}
		\item $\codim(\bbP^{n-1}\setminus U)\geq 2$;
		
		\item $S\cap \pi^{-1}(U)\rightarrow U$ is an isomorphism;
		
		\item $\sO_{\bbP(\sE)}(\zeta)|_{S\cap \pi^{-1}(U)} \cong \pi^*\sL|_{S\cap \pi^{-1}(U)}$.
	\end{enumerate}
	Take a log resolution $\mu:\widetilde{S}\rightarrow S$ such that $\mu$ is an isomorphism over $S\cap \pi^{-1}(U)$ and denote by $g:\widetilde{S}\rightarrow \bbP^{n-1}$ the induced birational morphism. Then we have
	\[
	\mu^*\sO_{\bbP(\sE)}(\zeta) \cong g^*\sO_{\bbP^{n-1}}(b)\otimes \sO_{\widetilde{S}}(\Delta),
	\]
	where $\Delta$ is a $g$-exceptional divisor. Since $\zeta$ is $\pi$-ample, the pull-back $\mu^*\zeta$ is $g$-nef. Then the negativity lemma implies that $-\Delta$ is effective.
	
	\bigskip
	\textbf{Claim 1.} \textit{Let $C\subset S$ be an irreducible projective curve such that $C\cap \pi^{-1}(U)\not=\emptyset$. Then we have $\zeta \cdot C\leq 0$.} 
	
	\bigskip
	
	\textit{Proof of Claim 1.} By assumption, the intersection $\mu(\Delta)\cap \pi^{-1}(U)$ is empty. Let $\widetilde{C}\subset \widetilde{S}$ be the strict transform of $C$. Then we have
	\[
	\zeta \cdot C = \mu^*\zeta \cdot \widetilde{C} = c_1(g^*\sO_{\bbP^{n-1}}(b))\cdot \widetilde{C} + \Delta\cdot \widetilde{C} \leq b\leq 0.
	\] 
	This finishes the proof of Claim 1.
	
	\bigskip
	
	Note that $\zeta|_X=-K_X$ is ample, thus Claim 1 implies that the image $\pi(X\cap S)$ is contained in $\bbP^{n-1}\setminus U$. In particular, let $l\subset U$ be a general line and let $\bar{l}\subset S$ be the section corresponding to the quotient $\sE|_l\rightarrow \sL|_l$. Then $X$ is disjoint from $\bar{l}$. In particular, we have 
	\[
	X\cdot \bar{l} = (2\zeta + (n-c)H)\cdot \bar{l}=2b+(n-c)=0.
	\]
	As a consequence, we have $2b=c-n$. 
	
	\bigskip
	
	\textbf{Claim 2.} \textit{The morphism $\sE\rightarrow \sL$ is surjective.} 
	
	\bigskip
	
	\textit{Proof of Claim 2.} Let $x\in X$ be an arbitrary point and let $x\in l$ be a general line passing through $x$ such that $l\cap U\not=\emptyset$. We consider the restriction \[
	\sigma_l: \sE|_l\rightarrow \sL|_l.
	\]
	We claim that $\sigma_l$ is surjective. Otherwise, let $\sQ_l$ be the image of $\sigma_l$. Then we must have $\sQ_l\cong \sO_{\bbP^1}(b')$ for some $b'<b$. Let $\bar{l}\subset \bbP(\sE|_l)$ be the section corresponding to the quotient $\sE|_l\rightarrow \sQ_l$. Then we obtain
	\[
	X\cdot \bar{l} = (2\zeta + (n-c)H)\cdot \bar{l} =2b'+n-c<2b+n-c=0.
	\]
	In particular, $\bar{l}$ is contained in $X$ and $\zeta \cdot \bar{l}=b'<0$, which is impossible as $\zeta|_X=-K_X$ is ample. This finishes the proof of claim 2.
	
	\bigskip
	
	\textbf{Claim 3.} \textit{The vector bundles $\sE$ splits as a direct sum of line bundles as follows
		\[
		\sO_{\bbP^{n-1}}(c-b-r+d) \oplus \sO_{\bbP^{n-1}}(r-d) \oplus \sO_{\bbP^{n-1}}(b).
		\]} 
	
	\textit{Proof of Claim 3.}
	Firstly note that we have $\zeta|_A\cong \sO_A(r-d)$. Thus $h^*\sE$ admits a quotient line bundle $h^*\sE\rightarrow \sO_A(r-d)$ with the corresponding section $A'\subset \bbP(h^*\sE)$ such that 
	\[
	\bar{h}(A')=A,
	\]
	where $\bar{h}:\bbP(h^*\sE)\rightarrow \bbP(\sE)$ is the induced morphism. 
	
	On the other hand, let $S'\subset \bbP(h^*\sE)$ be the section corresponding to the induced quotient line bundle $h^*\sE\rightarrow h^*\sL$. Then we have $\bar{h}(S')=S$. By Claim 2, $S$ is a section of $\bbP(\sE)\rightarrow \bbP^{n-1}$ such that $\zeta|_S\cong \sO_{\bbP^{n-1}}(b)$. This yields that $X$ is disjoint from $S$  and hence $A$ is disjoint from $S$. Thus, $A'$ is also disjoint from $S'$. Let $\sF\subset \sE$ be the kernel of $\sE\rightarrow \sL$.  Then the induced morphism $h^*\sF\rightarrow \sO_A(r-d)$ is surjective. As a consequence, we obtain the following exact sequence of vector bundles
	\[
	0\rightarrow \sG\rightarrow h^*\sF \rightarrow \sO_A(r-d) \rightarrow 0.
	\]
	As $A$ is a Fano manifold of Picard number $1$ and of dimension $\geq 2$, we must have $H^1(A,\sO_A(i))=0$ for any $i\in \bbZ$ by Kodaira's vanishing theorem. Then we obtain 
	\[
	h^*\sF\cong \sG\oplus \sO_A(r-d)\cong \sO_A(c-b-r+d)\oplus \sO_A(r-d).
	\]
	Then Lemma \ref{p.splitting} below implies that $\sF\cong \sO_{\bbP^{n-1}}(c-b-r+d)\oplus \sO_{\bbP^{n-1}}(r-d)$ and we are done.
\end{proof}

\begin{lemma}
	\label{p.splitting}
	Let $f:Y\rightarrow X$ be a finite morphism between Fano manifolds of Picard number $1$ with dimension at least $2$. Let $\sE$ be a vector bundle of rank $2$ over $X$. If $f^*\sE\cong \sL_1\oplus \sL_2$, then there exist line bundles $\sM_i$ on $X$ such that $f^*\sM_i\cong\sL_i$ for $i=1$, $2$ and $\sE\cong \sM_1\oplus \sM_2$.
\end{lemma}

\begin{proof}
	Firstly we assume that $f^*\sE$ is semistable. Then we have $\sL_1\cong \sL_2$ as $Y$ is Fano with $\rho(Y)=1$. In particular, the vector bundle $f^*\sE$ is numerically projectively flat (see \cite[Definition 4.1]{LiuOuYang2020}), so is $\sE$ itself. As $X$ is simply connected, it follows that $\sE$ is a direct sum $\sM_1\oplus \sM_2$ such that $\sM_1\cong \sM_2$. Then it is clear that we have $f^*\sM_1\cong  \sL_1$ as $\det(f^*\sE)=f^*\det(\sE)$.
	
	Next we assume that $f^*\sE$ is not semistable. Then $\sE$ itself is not semistable. Without loss of generality, we may assume that $c_1(\sL_1)>c_1(\sL_2)$. Let $\sM_1\subset \sE$ be the maximal destabilisor. Then we have $c_1(f^*\sM_1)>c_1(\sL_2)$. In particular, the induced morphism $f^*\sM_1\rightarrow f^*\sE$ factors through $\sL_1\rightarrow f^*\sE$; that is, $f^*\sM_1\subset \sL_1$. As $\sM_1$ is an invertible sheaf and $\sM_1$ is saturated in $\sE$, it follows that $f^*\sM_1\subset \sL_1$ is also saturated and hence $f^*\sM_1\rightarrow \sL_1$ is an isomorphism. Thus, the line bundle $\sM_1$ is a subbundle of $\sE$ and therefore $\sM_2:=\sE/\sM_1$ is a line bundle satisfying $f^*\sM_2\cong \sL_2$. In particular, as $X$ is a Fano manifold of Picard number $1$ with dimension at least $2$, it follows $H^1(X,\sM)=0$ for any line bundle $\sM$ over $X$, and hence $\sE\cong \sM_1\oplus \sM_2$.
\end{proof}

Now we assume that $A$ is the $10$-dimensional spinor variety. As $2d/e$ is an integer, $e=12$ and $d+1\leq r=8$, as computed in the proof of Lemma \ref{Section-Criterion-appendix}, we obtain
\begin{center}
	$e=12$, $d=6$, $r=8$, $\sO_A(\zeta)\cong \sO_A(2)$ and $\sO_X(A)\cong \sO_X(6\zeta - 18H)$. 
\end{center}
Moreover, we have the following equations:
\[
\begin{cases}
	\quad\quad K_X^2\cdot (H|_X)^9    & =\, \, \frac{4d}{e}+4(r-d)=10 \\
	(-K_X)^3\cdot (H|_X)^8 & =\, \, \frac{12d(r-d)}{e}+6(r-d)^2+\frac{8d^2}{e^2}=38\\
	(-K_X)^4\cdot (H|_X)^7 & =\, \, (-K_X)^3\cdot (3H|_X+\frac{1}{6}A)\cdot (H|_X)^{7}=130.
\end{cases}
\]
Denote by $L$ a general hyperplane section of $\bbP^{10}$.  We are ready to calculate the Chern classes of $\sE$. Recall that we have the following
\[
\zeta^3 = \pi^*c_1(\sE)\cdot \zeta^2 - \pi^*c_2(\sE)\cdot \zeta + \pi^*c_3(\sE).
\]
Firstly we have
\[
\zeta^2\cdot (2\zeta+(11-c)H)\cdot H^9= K_X^2\cdot (H|_X)^9=10. 
\]
This implies that $11+c=10$ and hence $c=\zeta^3\cdot H^9=-1$.

Secondly we have
\[
\zeta^3\cdot (2\zeta+ 12H)\cdot H^8 = (-K_X)^3\cdot (H|_X)^8 =38.
\]
This implies that $c_2(\sE)\cdot L^8=-24$. One can also calculate that $c_3(\sE)\cdot L^7=-36$, but we do not need it in the following so we leave it for the interested reader.

\begin{prop}
	\label{p.S5}
	In Lemma \ref{Section-Criterion-appendix}, $A$ is not isomorphic to the $10$-dimensional spinor variety $\bbS_{5}$.
\end{prop}

\begin{proof}
	
	As $c_2(\sE)\cdot L^8<0$, the Bogomolov inequality implies that $\sE$ is not semi-stable. Let $\sQ$ the last graded piece of the Harder-Narasimhan filtration of $\sE$ and denote by $\sG$ the quotient $\sE/\sQ$. Then the determinant $\det(\sG)$ is isomorphic to $\sO_{\bbP^{n-1}}(b)$ for some $b\leq -1$. 
	
	Firstly we assume that $\sG$ has rank $1$. Then by Lemma \ref{Lemma:Spliting-type-I} above, $b=-6$  and we have
	\[
	\sE\cong \sO_{\bbP^{10}}(3)\oplus \sO_{\bbP^{10}}(2) \oplus \sO_{\bbP^{10}}(-6).
	\]
	Let $h^*\sE\rightarrow \sO_A(2)$ be the line bundle quotient corresponding to a section $A'\subset \bbP(\sE)$ such that $\bar{h}(A')=A$. Then it is clear that we have the following factorisation
	\[
	h^*\sE\rightarrow \sO_{A}(2)\oplus \sO_A(-6) \rightarrow \sO_A(2).
	\]
	This implies that $A$ is contained in the prime divisor 
	\[
	F=\bbP(\sO_{\bbP^{10}}(2)\oplus \sO_{\bbP^{10}}(-6))\subset \bbP(\sE).
	\]
	Note that $F\cap X\rightarrow \bbP^{10}$ is a generically finite morphism of degree $2$ since $X$ is a conic bundle and $F\in |\zeta-3H|$. Nevertheless, this is impossible as $A$ is an irreducible component of $F\cap X$ and $A\rightarrow \bbP^{10}$ is of degree $e=12$.
	
	Now we assume that $\sG$ has rank $2$. Let $\sL$ be the kernel of $\sE\rightarrow \sG$. Then we have $\sL\cong \sO_{\bbP^{10}}(a)$ for some $a\geq 0$ by the construction of $\sG$. 
	
	\bigskip
	
	\textbf{Claim.} \textit{$a\leq 2$.}
	
	\bigskip
	
	\textit{Proof of Claim.} Assume to the contrary that $a>2$. By our assumption, there exists a line bundle quotient $h^*\sE\rightarrow \sO_A(2)$ with the corresponding section $A'\subset \bbP(h^*\sE)$ such that $\bar{h}(A')=A$. Moreover, as $a>2$, it follows that the composition
	\[
	h^*\sL\rightarrow h^*\sE\rightarrow \sO_A(2)
	\]
	is identically zero. Hence, we have a factorisation
	\[
	h^*\sE\rightarrow h^*\sG\rightarrow \sO_A(2).
	\]
	Let $F\subset \bbP(\sE)$ be the main component of $\bbP(\sG)\subset \bbP(\sE)$. Then $F$ is a prime divisor such that $F\in |\zeta-aH|$ and $A\subset F$. As before, the induced morphism $F\cap X\rightarrow \bbP^{10}$ is a generically finite morphism of degree $2$, while $A\rightarrow \bbP^{10}$ is of degree $12$, which is impossible. This finishes the proof of the claim.
	
	\bigskip
	
	Note that $\sG$ is semistable by our assumption. Thus the Bogomolov's inequality says that $c_2(\sG)\cdot L^8\geq 0$ (see \cite[Theorem 3.4.1]{HuybrechtsLehn2010}). Nevertheless, by the definition of Chern classes, we have
	\[
	c_2(\sG)\cdot L^8 + c_1(\sG)\cdot c_1(\sL)\cdot L^8 = c_2(\sE)\cdot L^8=-24.
	\]
	This implies 
	\[
	c_2(\sG) \cdot L^8 =-24-(-1-a)a=-24+a+a^2\leq -18,
	\]
	which is a contradiction. 
\end{proof}

\begin{remark}
	One can see that the direct sum $\sO_{\bbP^{10}}(2)\oplus \sO_{\bbP^{10}}(3) \oplus \sO_{\bbP^{10}}(-6)$ has Chern classes $(-1,-24,-36)$ with respect to $L$.
\end{remark}

Now we are in the position to prove Proposition \ref{p.rhs-conic}.

\begin{proof}[Proof of Proposition \ref{p.rhs-conic}]
	By Lemma \ref{Section-Criterion-appendix} and Proposition \ref{Prop:eleq2d-2}, the only possibilities of $A$ are as follows: a projective space, a quadric hypersurface, the Grassmann variety $\text{Gr}(2,5)$ and the $10$-dimensional spinor variety $\bbS_{10}$. If $A$ is a projective space, then it is proved in \cite{Tsukioka2006} that $A$ is a section of $f$. If $A$ is the Grassmann variety $\text{Gr}(2,5)$, then we have $e=r=5$. In particular, there does not exists positive integers $d\leq r-1$ such that $2d/e$ is an integer and we can exclude it by Lemma \ref{Section-Criterion-appendix}. The $10$-dimensional spinor variety $\bbS_{5}$ is excluded in Proposition \ref{p.S5}.
\end{proof}

\subsection{Proof of Theorem \ref{Classification-Negative-Divisors-appendix}} 

Now we are ready to prove Theorem \ref{Classification-Negative-Divisors-appendix}. We will only deal with the cases which are affected by Lemma \ref{Section-Criterion}. The proof is based on a discussion with Masaru Nagaoka.

For case (1), denote by $R_1$ and $R_2$ the extremal rays of $\NE{X}$ and, without loss of generality, we shall assume that $A\cdot R_1>0$. Then we have the following diagram
\[
\begin{tikzcd}[column sep=large, row sep=large]
	&
	X \arrow[dl,"\sigma" above] \arrow[dr,"\pi"] &
	\\
	Y &
	&
	Z
\end{tikzcd}
\]

where $\sigma$ is the contraction corresponding to $R_1$. The case affected by \cite[Lemma 3.2]{Liu2020a} is that $\sigma$ is a conic bundle and the induced morphism $A\rightarrow Y$ is not an isomorphism. Note that $Y$ is always isomorphic to the projective space $\bbP^{n-1}$ by \cite[Main Theorem]{HwangMok1999}. Thus Proposition \ref{p.rhs-conic} shows that $A$ is isomorphic to a quadric hypersurface. Then, by adjunction formula, we have 
\[
-K_X=A+(n-1)H_X,
\]
where $H_X$ is the pull-back of a hyperplane section of $Y=\bbP^{n-1}$. Consider the following short exact sequence
\[
0\rightarrow \sO_X(-A-K_X) \rightarrow \sO_X(-K_X) \rightarrow \sO_A(-K_X)\rightarrow 0.
\]
Tensoring it with $\sO_{X}((d-n+1)H_X)$ yields
\[
0\rightarrow \sO_X(dH_X) \rightarrow \sO_X(-K_X-(n-1-d)H_X) \rightarrow \sO_A(-K_A-(n-1)H_X)\rightarrow 0.
\]
Here we use the fact that $\sN_{A/X}\cong \sO_A(-dH_X)$. Moreover, as $\sO_A(-K_A)\cong \sO_A(n-1)$, pushing-forward the exact sequence by $\sigma$ yields
\[
0\rightarrow \sO_{\bbP^{n-1}}(d)\rightarrow \sE \rightarrow \sO_{\bbP^{n-1}}\oplus \sO_{\bbP^{n-1}}(-1)\rightarrow 0.
\]
This implies
\[
\sE\cong \sO_{\bbP^{n-1}}(d)\oplus \sO_{\bbP^{n-1}}\oplus \sO_{\bbP^{n-1}}(-1).
\]
Note that $-K_X-(n-1-d)H_X$ is $\sigma$-very ample, it follows that $X$ is embedded in $\bbP(\sE)$ as a divisor such that $X\in |2\zeta+aH|$ for some integer $a$, where $H$ is the pull-back of a hyperplane section of $\bbP^{n-1}$ to $\bbP(\sE)$ and $\zeta$ is the tautological divisor of $\bbP(\sE)$. Then we obtain
\begin{align*}
	-\zeta|_X - (n-1-d)H_X = K_X   & = (K_{\bbP(E)} + 2\zeta+aH)|_X \\
	& = -\zeta|_X + (d-1-n+a)H_X.
\end{align*}
Here we use the fact that $\zeta|_X=-K_X-(n-1-d)H_X$. Hence we have $a=2$. Moreover, let $F\subset \bbP(\sE)$ be the prime divisor corresponding to the quotient 
\[
\sE\rightarrow \sO_{\bbP^{n-1}}\oplus\sO_{\bbP^{n-1}}(-1).
\] 
Then $A$ is contained $X\cap F$ and we have 
\[
\sO_\bbP(\sE)(F)|_X\cong \sO_{X}(\zeta-dH)\cong \sO_X(A).
\]
Hence, we obtain $A=X\cap F$ and we are in case (1.3).

For case (2), there exists a blow-up $\sigma:X\rightarrow Y$ along a smooth centre of codimension $2$, $Y$ is a smooth Fano variety and $A\cdot R>0$, where $R$ is the extremal ray of $\NE{X}$ generated by the class of a curve contracted by $\sigma$. Moreover, there exists a Fano manifold $Z$ of dimension $n-1$, $\rho(Z)=1$ and a $\bbP^1$-bundle $\pi:Y\rightarrow Z$. Set $A_Y=\sigma(A)$. Then $A\rightarrow A_Y$ is an isomorphism and $C$ is contained $A_Y$. Denote by $d'$ the unique positive integer such that $C\in |\sO_{A_Y}(d')|$. The case affected by \cite[Lemma 3.2]{Liu2020a} is that $A_Y$ is not a nef divisor in $Y$. Then the pair $(Y,A_Y)$ is isomorphic to one of the varieties listed in (1.1) and (1.3). The case (1.1) is already done and it remains to consider the case (1.3). Nevertheless, in this case, since $Y\rightarrow Z$ is a $\bbP^1$-bundle and there exists a contraction $Y\rightarrow Z'$ sending $A_Y$ to a point, by \cite[Lemma 3.9]{CasagrandeDruel2015}, the divisor $A_Y$ must be a section of $Y\rightarrow Z$, which is a contraction. Hence, the case (1.3) does not happen.

\subsection{Some other typos}

In \cite[Proposition 2.10]{Liu2020a}, the condition "$\sL|_D$ is very ample" should be replaced by the condition "$\sL|_D$ is simply generated". Similarly, in \cite[Proposition 2.11]{Liu2020a}, the condition "$\sL|_Y$ is very ample"  should be replaced by the condition "$\sL|_Y$ is simply generated". In the proof, these two propositions are used in the case with $D$ and $Y$ being a rational homogeneous space of Picard number $1$ and it is known that any ample line bundles on rational homogeneous spaces are simply generated (see for instance \cite[Theorem 1]{RamananRamanathan1985}).

\subsection*{Acknowledgements} 
I would like to express my gratitude to Masaru Nagaoka for pointing out the mistakes in the proof of Lemma 3.2 and also for many useful discussion to correct the main theorem in this paper. I also would like to thank Jinhyung Park for pointing out the mistakes in \cite[Proposition 2.10 and 2.11]{Liu2020a}.

\def\cprime{$'$} 

\renewcommand\refname{Reference}
\bibliographystyle{alpha}
\bibliography{ampledivisors}

\end{document}